\numberwithin{equation}{section}
\numberwithin{figure}{section}
\theoremstyle{plain}
\newtheorem{thm}{\protect\theoremname}[section]
  \theoremstyle{definition}
  \newtheorem{example}[thm]{\protect\examplename}
  \theoremstyle{plain}
  \newtheorem{cor}[thm]{\protect\corollaryname}
  \theoremstyle{remark}
  \newtheorem{rem}[thm]{\protect\remarkname}
  \theoremstyle{plain}
  \newtheorem{lem}[thm]{\protect\lemmaname}
  \providecommand{\corollaryname}{Corollary}
  \providecommand{\examplename}{Example}
  \providecommand{\lemmaname}{Lemma}
  \providecommand{\remarkname}{Remark}
\providecommand{\theoremname}{Theorem}
\begin{document}

\title{Short Title: Chicken Walks}

\maketitle
\textbf{\Large Understanding Chicken Walks on $n\times n$ Grid: Hamiltonian
Paths, Discrete Dynamics and Rectifiable Paths}{\Large \par}

\vspace{1cm}

\textbf{Appeared in}\textbf{\emph{ Mathematical Methods in the Applied
Sciences (Wiley-Blackwell) DOI: 10.1002/mma.3301 }}

\vspace{1cm}

\begin{center}

\author{ARNI S.R. SRINIVASA RAO{*}}

\vspace{0.2cm}

Georgia Regents University,

1120 15th Street, Augusta, GA 30912, USA

Email address: arrao@gru.edu

$ $

and

$ $

Bayesian and Interdisciplinary Research Unit, 

Indian Statistical Institute, Kolkata 700108

\end{center}

$ $\vspace{0.1cm}

\begin{center}

\author{FIONA TOMLEY and DAMER BLAKE }

\vspace{0.2cm}

The Royal Veterinary College,

University of London, Hatfield Herts AL9 7TA, UK

\end{center}

\tableofcontents{}

\vspace{0.1cm}

{*}Corresponding author.
\begin{abstract}
Understanding animal movements and modelling the routes they travel
can be essential in studies of pathogen transmission dynamics. Pathogen
biology is also of crucial importance, defining the manner in which
infectious agents are transmitted. In this article we investigate
animal movement with relevance to pathogen transmission by physical
rather than airborne contact, using the domestic chicken and its protozoan
parasite \emph{Eimeria} as an example. We have obtained a configuration
for the maximum possible distance that a chicken can walk through
straight and non-overlapping paths (defined in this paper) on square
grid graphs. We have obtained preliminary results for such walks which
can be practically adopted and tested as a foundation to improve understanding
of non-airborne pathogen transmission. Linking individual non-overlapping
walks within a grid-delineated area can be used to support modeling
of the frequently repetitive, overlapping walks characteristic of
the domestic chicken, providing a framework to model faecal deposition
and subsequent parasite dissemination by faecal/host contact.We also
pose an open problem on multiple walks on finite grid graphs. These
results grew from biological insights and have potential applications.
\textbf{Keywords: }Spread of bird diseases, \emph{Eimeria}, Maximum
walks, longest paths, NP-Complete.\textbf{ MSC}: 92A17, 68Q17
\end{abstract}

\section{Straight Walk and Non-overlapping Walk }

Parasitic pathogens with direct single-host life cycles rarely rely
on aerial transmission for dissemination, more commonly featuring
direct (i.e. physical contact) or indirect (environmental, food- or
water-borne) routes \cite{Taylor2007}. Examples include protozoans
such as \emph{Cryptosporidium} and \emph{Eimeria}, helminths such
as \emph{Ostertagia ostertagi} and arthropods such as \emph{Sarcoptes
scabei}. Understanding the transmission of such pathogens requires
an awareness of host movement as the initial source of pathogen spread,
informed by subsequent environmental factors such as food movement,
flow of water and other fomites. Recognition of the relevance of poultry
to food security has elevated the importance of their pathogens, with
parasites such as \emph{Eimeria} of key significance \cite{Chapman2013}.
\emph{Eimeria} can cause the disease coccidiosis, a severe enteritis
characterised by high morbidity and, sometimes, mortality. The global
cost of losses attributed to \emph{Eimeria} and their control has
been estimated to exceed \$3 billion per annum, complicated further
by welfare implications \cite{Dalloul2006-1}. Most \emph{Eimeria}
are absolutely host-specific and exhibit a strict faecal-oral lifecycle
including an environmental stage, called the oocyst, which must undergo
a process termed sporulation over twelve to thirty hours external
to the host in order to become infective. Thus, the physical behaviour
of chickens including the amount of time spent moving, the distance
moved, the pattern of movement and the frequency and location of defaecation
whilst moving are of critical importance to understanding \emph{Eimeria}
transmission. Transmission rates have previously been calculated for
\emph{Eimeria} \emph{acervulina} \cite{Velkers}. Overlaying these
data onto models of chicken movement will support prediction of \emph{Eimeria}
transmission through a flock, facilitating scrutiny of the impact
of management system and the opportunity for co-infection by genetically
diverse parasite strains \cite{Shirley,Williams,Willi}. The frequency
of co-infection with genetically diverse strains will determine the
rate at which cross fertilization may occur, influencing the occurrence
of novel genotypes with relevance to evasion from drug- and vaccine-mediated
parasite killing \cite{Damer}. Inspired by the importance of chicken
movement in \emph{Eimeria} transmission, this work grew into an exercise
to model chicken movement while studying the length of distance chickens
walk per unit time in a pen, their parasite disseminating characteristics
and the rate at which infection spreads between birds. In order to
understand the complexity of chicken movement we have begun by assuming
a square pen which can be subdivided into a cellular graph. The walks
considered in this manuscript are of maximum length. By joining several
such walks together in the future we will begin to recreate multiple
chicken paths as an entrée to modeling chicken movements in more complex
environments.

Let us consider an area, $S$, of dimension $n\times n$ ($n>1)$
which is divided into $n\times n-$small squares (or cells). Let $\left(i,j\right)$
be the cell which is located at $i^{th}$ row and $j^{th}$ column
of these $n$ cells. Suppose we leave a chicken in one of the cells
of $S$ and suppose we are interested in observing the walking behaviour
of chicken through the following two rules, i) Walking from one corner
point to a neighboring corner point and ii) Walking only through each
cell (excluding on the cell boundaries). The $(i,j)^{th}$ cell is
denoted by $S_{ij}\left[A_{i},B_{j},C_{j},D_{i}\right]$, where $A_{i}$,
$B_{j}$, $C_{j}$ and $D_{i}$ are four vertices of this cell which
are located at the upper left corner, upper right corner, lower right
corner and lower left corner, respectively. A chicken sitting inside
the cell $(i,j)$ (not on the vertices) is denoted by $K(i,j)$ and
a chicken sitting on the vertices $A_{i}$, $B_{j}$, $C_{j}$ and
$D_{i}$ of $S_{ij}$ is denoted by $K(A_{i})$, $K(B_{j})$, $K(C_{j})$
and $K(D_{i})$ of $S_{ij}$, respectively. A \emph{straight walk
}by $K(i,j)$ is defined here as a walk initiated by $K(i,j)$ for
all $i=1,2,\cdots,n$ and $j=1,2,\cdots,n$ by moving to neighboring
cell through adjacent sides only and a \emph{straight walk }by $K(A_{i})$
or $K(B_{j})$ or $K(C_{j})$ or $K(D_{i})$ of $S_{ij}$, respectively,
for all $i=1,2,\cdots,n$ and $j=1,2,\cdots,n$ is defined here as
a walk from one cell to another cell that shares an edge with the
current cell. For example, $K(1,3)$ means that the chicken is in
the cell which is at first row and third column and $K(A_{2})$ of
$S_{23}$ means chicken is at the vertex $A_{2}$ of cell $S_{23}$
(which is located at second row and third column) which has four vertices
$\left[A_{2},B_{3},C_{3},D_{2}\right]$. 

We can visualize the area $S$ either with even number of cells $\left(2n\times2n\right)$
or with odd number of cells $\left((2n+1)\times(2n+1)\right)$ and
is placed on a \emph{grid graph}, $G$, which is a subset of an \emph{infinite
graph, $G^{\infty}$ \cite{ITAI}.} See \cite{ITAI,Zami,Kesh,kwo,Thom}
for foundations on grid graphs and \cite{NASH,NASH2,BON,DS,HAR,RODL}
for infinite graphs. If an area $S$ has\emph{ }$\left(2n\times2n\right)$
cells then it will have $\left((2n+1)\times(2n+1)\right)$ \emph{vertices.}
This gives us some flexibility to construct walks connecting some
finite number of cells and relate such walks\emph{ }to the walks through
vertices. Using the same flexibility, we define a cell as \emph{even}
if both $i$ and $j$ are even or $i+j$ $\cong$ $0$$\mbox{(\ensuremath{\mbox{mod}}2)}$.
Hence, a maximum possible walk between two cells $(i,j)$ and $(i^{*},j^{*})$
can be considered as an \emph{Hamiltonian Path }between these two
cells. \emph{The problem of determining if a given graph G has a Hamiltonian
path is NP-Complete \cite{ITAI}}. We have described Hamiltonian and
related paths through cells in a grid in section 2. The maximum paths
between cells that we considered as described above and further discussed
in section 3 and 4 are simpler situations than NP-complete problems.
Our results indicate maximum possible walks can be configured based
on the position of the cells connecting walks in an even dimensional
area and an odd dimensional area. Primarily we differ in our approach
because we tried all our attempts by connecting maximum possible walks
between two cells. However, one can attempt to relate particular cases
of our types of walks with \emph{Hamiltonian path} configurations.

\section{Related Works}

Our results were not inspired by previous work on \emph{Hamiltonian
Paths or NP-Complete} problems. We obtained the solutions of maximum
possible walks from fundamental principles while trying to model \emph{chicken
walks }to understand transmission rates and cross fertilization of
certain parasites with strict fecal / oral life cycles among chickens.
We have thought of distributing the locations of defecations per unit
of time and hence we tried to link two \emph{Hamiltonian paths} at
these locations. Moreover, \emph{Hamiltonian Path} problems are related
to the paths connected between two vertices. See \cite{Bellman1962,Rubin1974}
for basic introduction to the \emph{Hamiltonian paths}. Let $G$ be
a finite and simple graph with at least $3$ vertices. Then, by Ore's
Theorem \cite{Ore1960}, $G$ is \emph{Hamiltonian, }if for every
pair of non-adjacent vertices (say, $a$ and $b$), the sum of the
degrees of $a$ and $b$ is at least $3.$ Ore's Theorem is based
on the arguments of the work by Newman \cite{Newman1958} who proved
that \emph{``Any graph with $2n$ vertices each of order not less
than n must contain a $2n-gon$''. }A graph $G$ is called Ore-type
$(k)$ if it satisfies $d(a)+d(b)\geq\left|G\right|+k$, where $d(a)$
and $d(b)$ are degrees of $a$ and $b,$ respectively. $G$ is $k-path$
\emph{Hamiltonian }if $G$ is a graph on $p$ vertices and $d(a)+d(b)\geq p+k$
for every pair $\left\{ a,b\right\} $ \cite{Kronk}. In general,
when $G$ has $p$ vertices, then $G$ is $k-path$ \emph{Hamiltonian}
if $G$ has at least $\frac{1}{2}(p-1)(p-2)+k+2$ edges \cite{Kronk}.
This condition is sufficient for a graph to be $k-path$ \emph{Hamiltonian}.
For works on the longest paths in undirected graphs (random) refer
to \cite{Ajtai,Pittel,Krivlevich2013}. Algorithms for approximating
the longest paths in grid graphs and meshes can be seen here \cite{Karger,Feder,Zhang2011,Fatemah,Fatemah2013}.
There are methods which are based on the longest paths in random graphs
(for example, see \cite{Ajtai}) and search for the trees formed by
probability processes \cite{Fernandez,Krive2010}. Using the Turing
machine-based models, computational complexity of $k-path$ problems
were studied (see \cite{Chou2005}) and for the importance of finding
a path in a plane, see \cite{Henrici1986}.

\section{Maximum Possible Walk }

In this section we study the properties of obtaining maximum possible
walks under the hypotheses of straight and non-overlapping walks.
\begin{thm}
\label{thm:1}(A) Suppose a straight walk is initiated by $K(i,j)$
in $S$ (the maximum distance covered by $K(i,j)$ without stepping
onto the same cell cannot exceed $n^{2}-1$), then there exists configurations
when the walk is initiated through any neighboring side of the $K(i,j)$. 

(B) Suppose a straight walk is initiated by $ $$K(A_{i})$ or $K(B_{j})$
or $K(C_{j})$ or $K(D_{i})$ of $S_{ij}$ in $S$ (the maximum distance
covered by each of these walks cannot exceed $(n+1)^{2}-1$), then
there exists configurations when the walk is initiated through any
neighboring vertex. \end{thm}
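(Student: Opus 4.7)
The plan is to handle each half of the theorem by combining a simple counting upper bound with an explicit Hamiltonian-path construction. A straight non-overlapping walk visits each cell (in part A) or each vertex (in part B) at most once and steps only between edge-adjacent pairs, so its length is bounded by one less than the number of nodes in the underlying cell or vertex graph. This immediately yields the bound $n^2-1$ in part (A), since $S$ has $n^2$ cells, and $(n+1)^2-1$ in part (B), since the vertex grid of $S$ has $(n+1)^2$ vertices.

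For the existence statement I would exhibit Hamiltonian paths in the cell-adjacency graph (part A) and in the vertex grid graph (part B) that start at the specified cell or vertex. The tool of choice is a boustrophedon or snake traversal, adapted to the starting position. To address the requirement that the walk can be initiated through any available neighboring side of $K(i,j)$, I would proceed by cases on the location of the start (corner, edge, interior) and on the chosen initial direction: after the first step to the chosen neighbor, partition the remaining unvisited region into one or two rectangular sub-boards, endow each with a snake-type Hamiltonian sub-path, and splice the sub-paths together through a shared adjacency. Reflection and rotation symmetries of $S$ collapse the four possible directions to essentially one canonical case, which cuts the bookkeeping down substantially.

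The main obstacle is the parity constraint inherent to bipartite grids: under the checkerboard coloring of cells, successive moves alternate colors, so the start and end of a Hamiltonian path must carry colors compatible with the parity of $n$. When $n$ is even both color classes in $S$ have the same cardinality $n^2/2$, and the snake construction goes through for every starting cell and every admissible initial direction. When $n$ is odd and $(i,j)$ lies in the minority color class, the bound $n^2-1$ is a genuine upper bound but is not attained from $(i,j)$; the existence assertion must then be interpreted as the attainment of the best possible length compatible with this parity restriction, and the snake must be truncated by one cell. The symmetric situation arises in part (B) when $n+1$ is odd. Most of the remaining work is careful bookkeeping showing that, for each admissible combination of start position and initial direction, the sub-rectangle snakes can indeed be glued into a single walk of the claimed length.
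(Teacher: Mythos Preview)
Your overall strategy matches the paper's: a trivial counting argument for the upper bound, followed by explicit snake-type (boustrophedon) constructions for existence. Where you diverge is in the level of abstraction. The paper does not argue via ``partition into rectangles, snake each piece, and splice''; instead it fixes one representative parity configuration of the starting cell $(i',j')$ in a $2n\times 2n$ board and writes out, for each of the four initial directions separately, a fully explicit list of moves (labelled $(a_1),(a_2),\ldots$, then $(b_1),(b_2),\ldots$, etc.) that traces a Hamiltonian path of length $4n^2-1$. The remaining parity cases, the $(2n+1)\times(2n+1)$ board, and part (B) are then dispatched in a sentence or two by asserting that analogous constructions work and by observing that the vertex grid of a $2n\times 2n$ cell array is itself a $(2n+1)\times(2n+1)$ grid, so (B) reduces to (A). Your symmetry reduction and rectangle-splicing scheme would, if carried out, yield the same paths more economically; the paper simply chooses brute-force enumeration over structural argument.

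Your parity discussion is more careful than the paper's. You are right that on an odd board a start in the minority colour class cannot launch a Hamiltonian path, so the bound $n^2-1$ is not attained from every cell; the paper's treatment of the $(2n+1)\times(2n+1)$ case is a one-line assertion that does not confront this. In that sense your proposal is not so much taking a different route as being more honest about where the route has a pothole: the paper's proof is detailed only for the even-side case, and your reinterpretation (``best possible length compatible with parity'') is the natural fix, though it is not what the paper actually states or proves.
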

\begin{proof}
That maximum distance travelled is $n^{2}-1$\textbf{ }is easy to
verify, so we will concentrate here on configurations. \textbf{(A)
}We introduce notations for the directions for movement of a chicken
between cells either row wise or column wise. A chicken moved from
$(i,j)$ to $(i-1,j)$ is denoted by the direction $_{(i-1)j}d_{ij}$,
similarly a move from $(i,j)$ to $(i,j+1)$ is denoted by the direction
$_{i(j+1)}d_{ij}$, move from $(i,j)$ to $(i+1,j)$ is denoted by
the direction $_{(i+1)j}d_{ij}$, move from $(i,j)$ to $(i,j-1)$
is denoted by the direction $_{i(j-1)}d_{ij}$. 

We prove the theorem in two situations, (I) when $S$ has dimension
$2n\times2n$ and (II) when $S$ has dimension $(2n+1)\times(2n+1)$ 

(I) \textbf{$S$ has dimension $2n\times2n$. }Consider a chicken
in an arbitrary cell, $(i',j')$ i.e. $K(i',j')$. Suppose $(2n-i')$
is an odd number, $(2n-j')$ is an even number. This means there are
an odd number of columns to the right of $K(i',j')$, an even number
of columns to the left of $K(i',j')$ and an odd number of rows above
$K(i',j')$, an even number of rows below $K(i',j')$. We prove the
statement for each of the four directions. 

\textbf{(a)} Starting direction from $K(i',j')$ is $_{(i'-1)j'}d_{i'j'}$.
Follow the configuration given in the steps shown below:

($a_{1}$) take $(i'-1)$ steps in the direction $_{(i'-1)j'}d_{i'j'}$
to reach the first row, ($a_{2}$) take $(2n-i')$ steps in the direction
$_{1(2n-i'+1)}d_{1(2n-i')}$ to reach the last column, ($a_{3}$)
take $(2n-1)$ steps in the direction $_{2(2n)}d_{1(2n)}$ to reach
the last row, ($a_{4}$) take one step in the direction $_{(2n)(2n-1)}d_{(2n)(2n)}$,
($a_{5}$) take $(2n-2)$ steps in the direction $_{(2n-1)(2n-1)}d_{(2n)(2n-1)}$,
($a_{6}$) take one step in the direction $_{2(2n-2)}d_{2(2n-1)}$,
($a_{7}$) take $(2n-2)$ steps in the direction $_{(2n)(2n-2)}d_{2(2n-2)}$
to reach last row, ($a_{8}$) repeat the steps similar to the steps
($a_{4}$) to ($a_{6}$) to reach the last row and $(2n-j'+1)$ column
where the given chicken is currently located, i.e. $K\left(2n,(2n-j'+1)\right)$,
($a_{9}$) take $(2n-j')$ steps in the direction $_{(2n)(2n-j')}d_{(2n)(2n-j'+1)}$
$ $ to reach the first column, ($a_{10}$) take $(2n-1)$ steps in
the direction $_{(2n-1)1}d_{(2n)1}$ to reach the first row, ($a_{11}$)
take one step in the direction $_{12}d_{11}$, ($a_{12}$) take $(2n-2)$
steps in the direction $_{22}d_{12}$, ($a_{13}$) take one step in
the direction $_{(2n-1)3}d_{(2n-1)2}$, ($a_{14}$) take $(2n-2)$
steps in the direction $_{(2n-2)3}d_{(2n-1)3}$ to reach first row,
($a_{15}$) take one step in the direction $_{14}d_{13}$, ($a_{16}$)
take $(2n-1)$ steps in the direction $_{24}d_{14}$, ($a_{17}$)
repeat the steps similar to the steps ($a_{8}$) to ($a_{16}$) such
that the chicken is located in the $(2n-1)$ row and $(2n-j'-1)$
column i.e. $K\left((2n-1),(2n-j'-1)\right)$, ($a_{18}$) take one
step in the direction $_{(2n-1)(2n-j')}d_{(2n-1)(2n-j'-1)}$, ($a_{19}$)
take $(i'-2)$ steps in the direction $_{(2n-2)(2n-j')}d_{(2n-1)(2n-j')}$
to reach the cell $\left((2n-i'+1\right),(2n-j')$ such that we will
have $K\left((2n-i'+1\right),(2n-j')$. This way the chicken takes
$4n^{2}-1$ steps, and we achieved maximum distance configuration. 

\textbf{(b)} Starting direction from $K(i',j')$ is $_{(i')(j'+1)}d_{i'j'}$.
Maximum distance configuration is given in the steps shown below:

($b_{1}$) take $(2n-j')$ steps in the direction $_{(i')(j'+1)}d_{i'j'}$
to reach the last column, ($b_{2}$) take $(2n-i')$ steps in the
direction $_{(2n-i'+1)(2n)}d_{(2n-i')(2n)}$ to reach the last row,
($b_{3}$) take $(2n-1)$ steps in the direction $_{(2n)(2n-1)}d_{(2n)(2n)}$
to reach the first column, ($b_{4}$) take one step in the direction
$_{(2n-1)1}d_{(2n)1}$, ($b_{5}$) take $(2n-2)$ steps in the direction
$_{(2n-1)2}d_{(2n-1)1}$, ($b_{6}$) take one step in the direction
$_{(2n-2)(2n-1)}d_{(2n-1)(2n-1)}$, ($b_{7}$) take $(2n-2)$ steps
in the direction $_{(2n-2)(2n-2)}d_{(2n-2)(2n-1)}$ to reach the first
column, ($b_{8}$) take one step in the direction $_{(2n-3)1}d_{(2n-2)1}$,
($b_{9}$) take $(2n-2)$ steps in the direction $_{(2n-3)2}d_{(2n-3)1}$,
($b_{10}$) repeat the steps similar to the steps ($b_{6}$) to ($b_{9}$)
such that the chicken is located in the cell $\left((2n-i'+3),(2n-1)\right)$,
i.e. $K\left((2n-i'+3),(2n-1)\right)$, ($b_{11}$) take two steps
in the direction $_{(2n-i'+1)(2n-1)}d_{(2n-i'+3)(2n-1)}$, ($b_{12}$)
take one step in the direction $_{(2n-i'+1)(2n-2)}d_{(2n-i'+1)(2n-1)}$,
($b_{13}$) take one step in the direction $_{(2n-i'+2)(2n-2)}d_{(2n-i'+1)(2n-2)}$,
($b_{14}$) take one step in the direction $_{(2n-i'+2)(2n-3)}d_{(2n-i'+2)(2n-2)}$, 

($b_{15}$) take one step in the direction $_{(2n-i'+1)(2n-3)}d_{(2n-i'+2)(2n-3)}$,
($b_{16}$) repeat the steps similar to the steps ($b_{7}$) to ($b_{15}$)
to reach the cell $\left((2n-i'+1),1\right)$, ($b_{17}$) continue
for $i'$ steps in the same direction to reach the cell $(1,1)$,
($b_{18}$) take $(2n-1)$ steps in the direction $_{12}d_{11}$ to
reach the last column, ($b_{19}$) take one step in the direction
$_{2(2n)}d_{1(2n)}$, ($b_{20}$) take $(2n-2)$ steps in the direction
$_{22}d_{2(2n)}$, ($b_{21}$) take one step in the direction $_{32}d_{22}$,
($b_{22}$) take $(2n-2)$ steps in the direction $_{33}d_{32}$ to
reach the last column, ($b_{23}$) repeat the steps similar to the
steps ($b_{19}$) to ($b_{22}$) such that the chicken is located
in the cell $\left((i'-3),2n\right)$, ($b_{24}$) take two steps
in the direction $_{(i'-2)(2n)}d_{(i'-3)(2n)}$, ($b_{25}$) take
one step in the direction $_{(i'-1)(2n-1)}d_{(i'-1)(2n)}$, ($b_{26}$)
take one step in the direction $_{(i'-2)(2n-1)}d_{(i'-1)(2n-1)}$,
($b_{27}$) take one step in the direction $_{(i'-2)(2n-2)}d_{(i'-2)(2n-1)}$,
($b_{28}$) take one step in the direction $_{(i'-1)(2n-2)}d_{(i'-2)(2n-2)}$,
($b_{29}$) repeat the steps similar to the steps ($b_{25}$) to ($b_{28}$)
to reach the cell $\left((i'-1),2\right)$, i.e. $K\left((i'-1),2\right)$,
($b_{30}$) take one step in the direction $_{i'2}d_{(i'-1)2}$, ($b_{31}$)
take $(j'-1)$ steps in the direction $_{i'3}d_{i',2}$ to reach the
maximum distance configuration. 

\textbf{(c)} Starting direction from $K(i',j')$ is $_{(i'+1)j'}d_{i'j'}$.
Maximum distance configuration is given in the steps shown below:

($c_{1}$) take $(2n-i')$ steps in the direction $_{(i'+1)j'}d_{i'j'}$
to reach last row, ($c_{2}$) take $(j'-1)$ steps in the direction
$_{(2n)(j'-1)}d_{(2n)(j')}$ to reach first column, ($c_{3}$) take
one step in the direction $_{(2n-1)1}d_{(2n)1}$, ($c_{4}$) take
$(2n-2)$ steps in the direction $_{(2n-2)1}d_{(2n-1)1}$ to reach
first row, ($c_{5}$) take one step in the direction $_{12}d_{11}$,
($c_{6}$) take $(2n-2)$ steps in the direction $_{22}d_{12}$, ($c_{7}$)
repeat the steps similar to the steps ($c_{4}$) to ($c_{6}$) until
the chicken is located in the cell $\left((2n-1),(j'-3)\right)$$ $,
i.e. $K\left((2n-1),(j'-3)\right)$, ($c_{8}$) take two steps in
the direction $_{(2n-1)(j'-2)}d_{(2n-1)(j'-3)}$, ($c_{9}$) take
one step in the direction $_{(2n-2)(j'-1)}d_{(2n-1)(j'-1)}$, ($c_{10}$)
take one step in the direction $_{(2n-2)(j'-2)}d_{(2n-2)(j'-1)}$, 

($c_{11}$) take one step in the direction $_{(2n-3)(j'-2)}d_{(2n-2)(j'-2)}$,
($c_{12}$) take one step in the direction

$_{(2n-3)(j'-1)}d_{(2n-3)(j'-2)}$, ($c_{13}$) repeat the steps similar
to the steps ($c_{10}$) to ($c_{12}$) to reach the cell $\left(1,(j'-1)\right)$,
i.e. $K\left(1,(j'-1)\right)$, ($c_{15}$) take $(2n-j'+1)$ steps
in the direction $_{1j'}d_{1(j'-1)}$ to reach last column, ($c_{15}$)
take $(2n-1)$ steps in the direction $_{2(2n)}d_{1(2n)}$ to reach
last row, ($c_{16}$) take one step in the direction $_{(2n)(2n-1)}d_{(2n)(2n)}$,
($c_{17}$) take $(2n-2)$ steps in the direction $_{(2n-1)(2n-1)}d_{(2n)(2n-1)}$,
($c_{18}$) take one step in the direction$_{2(2n-2)}d_{2(2n-1)}$,
($c_{19}$) take $(2n-2)$ steps in the direction $_{3(2n-2)}d_{2(2n-2)}$,
($c_{20}$) repeat the steps similar to the steps ($c_{16}$) to ($c_{19}$)
such that the chicken is located in the cell $\left((2n),(j'+3)\right)$,
i.e. $K\left((2n),(j'+3)\right)$, ($c_{21}$) take one step in the
direction $_{(2n)(j'+2)}d_{(2n)(j'+3)}$, ($c_{22}$) take one step
in the direction $_{(2n)(j'+1)}d_{(2n)(j'+2)}$, ($c_{23}$) take
one step in the direction $_{(2n-1)(j'+1)}d_{(2n)(j'+1)}$, ($c_{24}$)
take one step in the direction $_{(2n-1)(j'+2)}d_{(2n-1)(j'+1)}$,
($c_{25}$) take one step in the direction $_{(2n-2)(j'+2)}d_{(2n-1)(j'+2)}$,
($c_{26}$) repeat the steps similar to the steps ($c_{22}$) to ($c_{25}$)
such that the chicken in located in the cell $\left(2,(j'+2)\right)$,
i.e. $K\left(2,(j'+2)\right)$, ($c_{27}$) take two steps in the
direction $_{2(j')}d_{2(j'+2)}$, ($c_{28}$) take $(i'+3)$ steps
in the direction $_{3j'}d_{2j'}$ such that the chicken reaches maximum
distance under the hypotheses. 

\textbf{(d)} Starting direction from $K(i',j')$ is $_{i'(j'-1)}d_{i'j'}$.
Maximum distance configuration is given in the steps shown below:

($d_{1}$) take $(j'-1)$ steps in the direction $_{i'(j'-1)}d_{i'j'}$
to reach the first column, ($d_{2}$) take $(i'-1)$ steps in the
direction $_{(i'-1)1}d_{i'1}$ to reach the first row, ($d_{3}$)
take $(2n-1)$ steps in the direction of $_{12}d_{11}$ to reach the
last column, ($d_{4}$) take one step in the direction $_{2(2n)}d_{1(2n)}$,
($d_{5}$) take $(2n-2)$ steps in the direction of $_{2(2n-1)}d_{2(2n)}$,
($d_{6}$) take one step in the direction $_{32}d_{22}$, ($d_{7}$)
take $(2n-2)$ steps in the direction $_{33}d_{32}$ to reach the
last column, ($d_{8}$) repeat the steps similar to the steps ($d_{4}$)
to ($d_{7}$) such that the chicken is located at $\left((i'-1),2n\right)$,
i.e. $K\left((i'-1),2n\right)$, ($d_{9}$) take $(2n-i'+1)$ steps
in the direction $_{i'(2n)}d_{(i'-1)(2n)}$ to reach the last row,
($d_{10}$) take one step in the direction $_{(2n)(2n-1)}d_{(2n)(2n)}$,
($d_{11}$) take $(2n-2)$ steps in the direction $_{(2n)(2n-2)}d_{(2n)(2n-1)}$
to reach the first column, ($d_{12}$) take one step in the direction
$_{(2n-1)1}d_{(2n)1}$, ($d_{13}$) take $(2n-2)$ steps in the direction
$_{(2n-1)2}d_{(2n-1)1}$, ($d_{14}$) take one step in the direction
$_{(2n-2)(2n-1)}d_{(2n-1)(2n-1)}$, ($d_{15}$) repeat the steps similar
to the steps ($d_{11}$) to ($d_{14}$) such that the chicken is located
at $\left(i',(2n-1)\right)$, i.e. $K\left(i',(2n-1)\right)$, ($d_{16}$)
take $(2n-j'-2)$ steps in the direction $_{i'(2n-2)}d_{i'(2n-1)}$
to reach the maximum distance configuration at the cel $(i',j'+1)$. 

For all the other positions of the chicken at the beginning, we can
formulate configurations in each of the four directions to reach the
maximum distance. 

\textbf{(II) $S$ has dimension $(2n+1)\times(2n+1)$. }We can obtain
configuration for the longest walk in all four directions as explained
in $2n\times2n$ situation.

\textbf{(B). }Note that for a $2n\times2n$ dimensional area of cells,
there are $(2n+1)\times(2n+1)$ vertices, and if a chicken walks on
these vertices then by \textbf{(A) }the maximum distance walked is
$(n+1)^{2}-1$. 

When $K(i',j')$ is a corner cell then it will have two directional
options and when $K(i',j')$ is in boundary row or boundary column
(other than corner cell), then it will have three directional options,
and all these situations can be derived from the previous configurations.\end{proof}
\begin{example}
\label{example1}Here is an example $S$ has dimension $(2n+1)\times(2n+1)$
for the Theorem \ref{thm:1}. Suppose a walk is initiated by $K(1,1)$
in square of $S$ with $5\times5$. One of the longest walk is observed
when $K(1,1)$ reaches $K(5,5)$ by $ $the path, $\Gamma$, constructed
as below:

\begin{figure}
\begin{eqnarray*}
\Gamma\left((1,1)\rightarrow\left(5,5\right)\right) & = & \left[\begin{array}{c}
K(1,1)\\
\downarrow\\
K(1,2)\\
\downarrow\\
\vdots\\
K(1,5)\\
\downarrow\\
K(2,5)\\
\downarrow\\
\vdots\\
K(2,1)\\
\downarrow\\
K(3,1)\\
\downarrow\\
\vdots\\
K(3,5)\\
\downarrow\\
K(4,5)\\
\downarrow\\
\vdots\\
K(4,1)\\
\downarrow\\
K(5,1)\\
\downarrow\\
\vdots\\
K(5,5)
\end{array}\right]\\
\end{eqnarray*}

\caption{Path from $K(1,1)$ to $K(5,5)$ in example \ref{example1} }

\end{figure}

This path, $\Gamma$, covered all the cells and number of units travelled
by $K(1,1)$ under the straight walk and non-overlapping hypotheses
is $5^{2}-1.$ Suppose $S$ has dimension $(2n\times2n)$ for $n>1$,
then the longest path cannot be constructed in the above pattern between
$K(1,1)$ and $K(2n,2n).$ When $S$ has dimension $(2n\times2n)$
for $n>1$, $2k$, then the longest path observed, for example, is
a walk between $K(1,1)$ and $K(1,2)$ or $K(1,1)$and $K(2,1)$ which
takes the distance of $2k^{2}-1$ units. We will see this in Theorem
\ref{theorem 2}. $ $By induction type argument, we can prove if
$S$ has dimension $2n\times2n$ then maximum distance walked is $(2n)^{2}-1$
and if $S$ has dimension $(2n+1)\times(2n+1)$ then the maximum distance
walked is $(2n+1)^{2}-1$. \end{example}
\begin{thm}
\label{theorem 2}When $S$ has dimension $2n\times2n$ $(n>$1) then
there always exists at least one configuration for which the walk
between $K(i,j)$ and $K(i',j')$ is maximum, i.e. $(2n)^{2}-1$ units,
under the hypotheses of straight walk and non-overlapping walk and
when $S_{ij}(i,j)$ and $S_{i'j'}(i',j')$ have two common vertices
between them or $ $$S_{ij}(i,j)$ and $S_{i'j'}(i',j')$ are adjacent
cells \textbf{(}Here $S_{ij}(i,j)$ and \textbf{$S_{i'j'}(i',j'$)
}should not be the corner cells). If $S_{ij}(i,j)$ and $S_{i'j'}(i',j')$
are non adjacent cells then there is no configuration under the same
hypotheses for which the walk between $K(i,j)$ and $K(i',j')$ is
maximum. \end{thm}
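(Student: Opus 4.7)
The plan is to treat the two halves of Theorem~\ref{theorem 2} separately. The positive (existence) half will be proved by explicit construction, reusing the configurations already built in Theorem~\ref{thm:1}, while the negative (non-existence) half will be attacked via a bipartite colouring obstruction reinforced by a local block argument.

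For the existence half, fix two edge-adjacent non-corner cells $S_{ij}$ and $S_{i'j'}$. Since $(i,j)$ is non-corner, $K(i,j)$ has at least three admissible starting directions, so I can begin in a direction that initially moves away from $(i',j')$ and then apply whichever of the snake-style configurations (a)--(d) in the proof of Theorem~\ref{thm:1}(A) fits the residues of $i$ and $j$ modulo $2$. That configuration already produces a walk of length $(2n)^{2}-1$ visiting every cell exactly once; what remains is to verify that, after a rigid motion of the grid if necessary, the terminal cell coincides with $(i',j')$. This reduces to a finite case analysis over the four possible relative positions of $(i',j')$ to $(i,j)$ (north, south, east, west), with a secondary split by the parities of $i$ and $j$, and each case is a direct reading of the corresponding configuration in Theorem~\ref{thm:1}.

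For the non-existence half, I would use the $2$-colouring that paints $(i,j)$ black when $i+j$ is even and white otherwise. A straight walk advances between edge-adjacent cells and therefore alternates colour at each step, so a maximum walk of length $4n^{2}-1$ forces the two endpoints to carry opposite colours; edge-adjacent cells automatically satisfy this, which is consistent with the existence half. To pin the endpoints down further, I would overlay a block obstruction, partitioning $S$ into $n^{2}$ disjoint $2\times 2$ blocks and arguing by induction on $n$ that a Hamiltonian straight walk must traverse each block through a fixed pattern of entry/exit sides, which in turn forces the two global endpoints to sit in a common block and hence to be edge-adjacent cells of $S$. The main obstacle is precisely this block invariant: the bare bipartite argument only forbids same-colour endpoints and does not by itself rule out opposite-colour pairs that are geometrically distant, so a genuinely new ingredient is needed. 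I would base the induction at $n=2$ (checkable by inspection on a $4\times 4$ grid) and, for the inductive step, peel off the outer border of a $2(n+1)\times 2(n+1)$ grid to reduce to a $2n\times 2n$ walk with edge-adjacent endpoints; if the induction stalls, the fallback is a purely local corner analysis, classifying the constant number of $2\times 2$-corner traversal patterns and checking directly that they pin the global endpoints to an adjacent pair.
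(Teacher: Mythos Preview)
For the existence half your plan matches the paper's: both argue by explicit construction. The paper writes out a fresh step-by-step recipe (steps (i)--(xxvii)) rather than invoking the configurations of Theorem~\ref{thm:1}, but the underlying snake pattern is the same, so your reduction to Theorem~\ref{thm:1} is a legitimate shortcut.

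The non-existence half is where you and the paper diverge, and where your plan has a fatal gap. The paper does \emph{not} attempt a general argument at all: it exhibits a single non-adjacent pair, $K(2,2)$ and $K(2,4)$ in the $4\times4$ grid, and checks by inspection (Figure~\ref{Figure2-1}) that no walk between them attains length $15$. That is the entire content of the paper's proof of the second part---one instance, offered as a counterexample to the contrary assumption.

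Your proposed block invariant---that the two endpoints of any Hamiltonian straight walk in a $2n\times 2n$ grid must lie in a common $2\times2$ block and hence be edge-adjacent---is false, so neither the induction nor the fallback corner analysis can succeed. A counterexample already appears at your intended base case $n=2$: in the $4\times4$ grid the cells $(1,2)$ and $(3,1)$ are non-adjacent, non-corner, opposite-coloured, and lie in different $2\times2$ blocks, yet the walk
\begin{align*}
&(1,2)\to(1,1)\to(2,1)\to(2,2)\to(2,3)\to(1,3)\to(1,4)\to(2,4)\\
&\quad\to(3,4)\to(4,4)\to(4,3)\to(3,3)\to(3,2)\to(4,2)\to(4,1)\to(3,1)
\end{align*}
visits all $16$ cells and has length $15$. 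Thus the bipartite colouring is already the sharpest obstruction available and cannot be upgraded to force adjacency of the endpoints; the universal form of the second assertion that you are trying to establish does not in fact hold, and the paper sidesteps this by verifying only a single instance rather than the general claim.
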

\begin{proof}
Suppose there are an odd number of rows to the $i^{th}$ row and an
even number of columns to the left of $j^{th}$ column. We are interested
in demonstrating a configuration where $K(i,j)$ walks to $S_{i'j'}(i',j')$.
We follow below steps to reach $S_{i'j'}(i',j').$ 

(i) take $(i-1)$ steps in the direction $_{(i-1)j}d_{ij}$ to reach
the first row, (ii) take $(j-1)$ steps in the direction $_{1(j-1)}d_{1j}$
to reach the first column, (iii) take one step in the direction $_{21}d_{11}$,
(iv) take one step in the direction $_{22}d_{21}$, (v) take one step
in the direction $_{32}d_{22}$, (vi) take one step in the direction
$_{31}d_{32}$ to reach the first column, (vii) take one step in the
direction $_{41}d_{31}$, (viii) repeat the steps similar to the steps
(vi) to (vii) to reach the cell $S_{(2n)1}(2n,1)$, (ix) take two
steps in the direction $_{(2n)2}d_{(2n)1}$, (x) take $(2n-2)$ steps
in the direction $_{(2n-1)3}d_{(2n)3}$, (xi) take one step in the
direction $_{24}d_{23}$, (xii) take $(2n-2)$ steps in the direction
$_{34}d_{24}$ to reach the last row, (xiii) take one step in the
direction $_{(2n)5}d_{(2n)4}$, (xiv) repeat the steps similar to
the steps (x) to (xiii) to reach the cell $S_{(2n)j}(2n,j)$, (xv)
take $(2n-i-1)$ steps in the direction $_{(2n-1)j}d_{(2n)j}$, (xvi)
take one step in the direction $_{(i+1)(j+1)}d_{(i+1)j}$, (xvii)
take $(2n-i-1)$ steps in the direction $_{(i+2)(j+1)}d_{(i+1)(j+1)}$
to reach the last row, (xviii) take one step in the direction $d_{(2n)(j+1)}$,
(xix) take $(2n-2)$ steps in the direction $_{(2n-1)(j+1)}d_{(2n)(j+1)}$,
(xx) take one step in the direction $_{2(j+2)}d_{2(j+1)}$, (xxi)
take $(2n-2)$ steps in the direction $_{3(j+1)}d_{2(j+1)}$ to reach
last row, (xxii) repeat the steps similar to the steps (xviii) to
(xxi) such that $K(i,j)$ reaches the cell $S_{(2n)(2n-2)}\left(2n,(2n-2)\right)$,
(xxiii) take two steps in the direction $_{(2n)(2n-1)}d_{(2n)(2n-2)}$
to reach the cell $S_{(2n)(2n)}\left(2n,2n\right)$, (xiv) take one
step in the direction $_{(2n-1)(2n)}d_{(2n)(2n)}$, (xv) take one
step in the direction $_{(2n-1)(2n-1)}d_{(2n-1)(2n)}$, (xvi) take
one step in the direction $_{(2n-2)(2n-1)}d_{(2n-1)(2n-1)}$, (xvii)
take one step in the direction $_{(2n-2)(2n)}d_{(2n-2)(2n-1)}$, (xiv)
take one step in the direction $_{(2n-3)(2n)}d_{(2n-2)(2n)}$, (xv)
repeat the steps similar to the steps (xxi) to (xxiv) to reach the
cell $S_{1(2n)}\left(1,2n\right)$, (xxvi) take $(2n-j-1)$ steps
in the direction $_{1(2n-1)}d_{1(2n)}$ to reach the cell $S_{1(2n-j-1)}$,
(xxvii) take $i$ steps in the direction $_{2(2n-j-1)}d_{1(2n-j-1)}$
to reach the cell $S_{i(j+1)}\left(i,(j+1)\right)$ which is our desired
$S_{i'j'}(i',j')$. Since we covered all the cells in this configuration
the distance covered is $4n^{2}-1.$ 

\begin{figure}
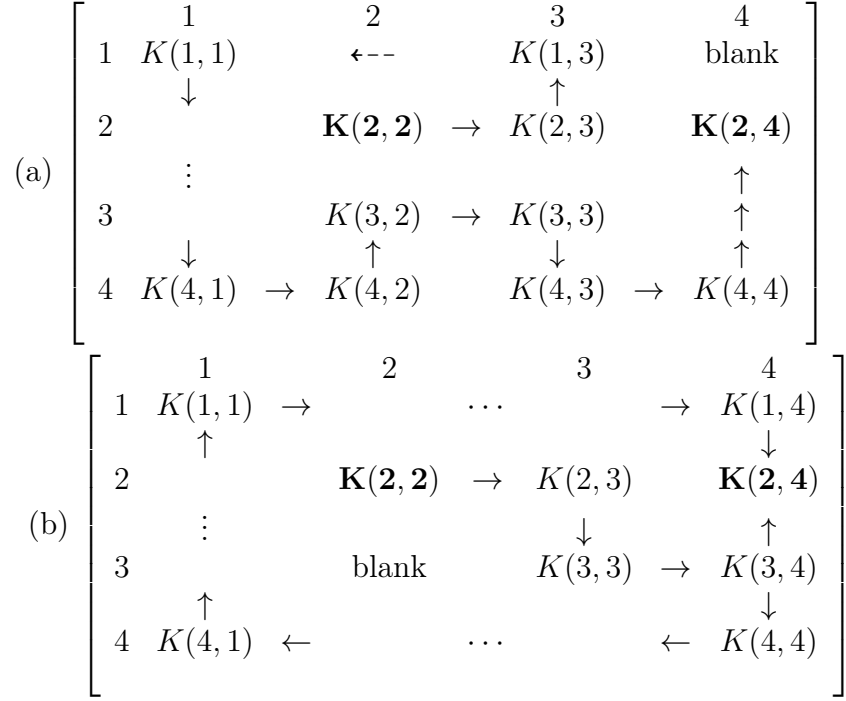

$ $(a) $\left[\begin{array}{ccccccccc}
 & 1 &  & 2 &  & 3 &  & 4\\
1 & K(1,1) &  & \dashleftarrow &  & K(1,3) &  & \mbox{blank}\\
 & \downarrow &  &  &  & \uparrow\\
2 &  &  & \mathbf{K(2,2)} & \rightarrow & K(2,3) &  & \mathbf{K(2,4)}\\
 & \vdots &  &  &  &  &  & \uparrow\\
3 &  &  & K(3,2) & \rightarrow & K(3,3) &  & \uparrow\\
 & \downarrow &  & \uparrow &  & \downarrow &  & \uparrow\\
4 & K(4,1) & \rightarrow & K(4,2) &  & K(4,3) & \rightarrow & K(4,4)\\
\\
\end{array}\right]$ $\quad$$ $

$ $

(b) $\left[\begin{array}{ccccccccc}
 & 1 &  & 2 &  & 3 &  & 4\\
1 & K(1,1) & \rightarrow &  & \cdots &  & \rightarrow & K(1,4)\\
 & \uparrow &  &  &  &  &  & \downarrow\\
2 &  &  & \mathbf{K(2,2)} & \rightarrow & K(2,3) &  & \mathbf{K(2,4)}\\
 & \vdots &  &  &  & \downarrow &  & \uparrow\\
3 &  &  & \mbox{blank} &  & K(3,3) & \rightarrow & K(3,4)\\
 & \uparrow &  &  &  &  &  & \downarrow\\
4 & K(4,1) & \leftarrow &  & \cdots &  & \leftarrow & K(4,4)\\
\\
\end{array}\right]$

\caption{Counter examples for second part of Theorem \ref{theorem 2}}
\label{Figure2-1}
\end{figure}

To prove second part, in contrary, let us assume that there exists
a configuration to obtain a maximum distance walked between $K(i,j)$
and $K(i',j')$ in any $S$ with $2n\times2n$ $(n\geq$1) dimension
when $K(i,j)$ and $K(i',j')$ are not adjacent. We bring one counter
example with configuration for two walks for which our assumption
fails to satisfy. Let us consider $K(i,j)=K(2,2)$ and $K(i',j')=K(2,4)$
and in $S$ with $4\times4$ dimension as shown in Figure \ref{Figure2-1}.
Both the walking paths configurations shown in Figure \ref{Figure2-1}(a)
and Figure \ref{Figure2-1}(b) have a distance covered 14 units less
than $\left(2.2\right)^{2}-1$ $units$. We can verify that other
walking paths from $K(2,2)$ to $K(2,4)$ would be less than $14$
$units$ or less. This is a contradiction to the hypothesis and that
proves the second part of the theorem.\end{proof}
\begin{example}
This is an example demonstration for the first part of Theorem \ref{theorem 2}.
Let us construct a configuration of walks between $K(i,j)=K(6,3)$
and $K(i',j')=K(6,4)$ when $S$ has dimension $10\times10$ i.e.
for $n=5$ (See Figure \ref{figure 2-2}). The trick to construct
such a walk depends on number of blank columns available before the
column in which $K(i,j)$ is located and number of blank columns available
after the column in which $K(i',j'$) s located. If the number of
blank columns are even then the configuration is given Figure \ref{figure 2-2}.
If the number of blank columns are odd on both the sides of $K(i,j)$
and $K(i',j')$, then for $K(5,4)$ and $K(5,5)$ adjacent squares
in $S$ with $8\times8$, we have given configuration in Figure \ref{Figure 2-3}.
In both of these examples, we saw that the distance walked was $\left(2.5\right)^{2}-1.$
Similar configuration structure can be used for higher dimension.
Instead the pair $K(5,4)$ and $K(5,5)$ in the Figure \ref{Figure 2-3},
suppose we are given, $K(5,4)$ and $K(4,4)$ to construct the configuration
for the longest walk. If we rotate Figure \ref{Figure 2-3} on its
right, the position of the cells $K(5,4)$ and $K(4,4)$ are similar
to the cells $K(5,4)$ and $K(5,5)$ before rotation. Hence the similar
configuration can be used after rotation and maximum distance walked
by $K(5,4)$ to reach $K(4,4)$ is also $\left(2.5\right)^{2}-1.$
The configuration to obtain maximum distance walked from $K(5,4)$
to $K(6,4)$ in Figure \ref{Figure 2-3} is similar to the one demonstrated
in the Figure \ref{figure 2-2}, because after rotation of $S$, the
number of blank rows on the left of the cell $(6,4)$ (which has become
$(4,3)$ after rotation) are even numbered. Similarly, the configuration
to obtain maximum distance walked from $K(5,4)$ to $K(5,3)$ in Figure
\ref{Figure 2-3} is similar to the one demonstrated in the Figure
\ref{figure 2-2}, because after rotation of $S$, the number of blank
rows on the left of the cell $(5,3)$ (which has become $(3,4)$ after
rotation) are even numbered. When $S$ has any $2n\times2n$ $(n\geq$1)
$ $dimension, we can configure a maximum distance walk in one of
the types discussed above. \end{example}
\begin{cor}
\label{cor:(max possible walks)}The total number of distinct pairs
of $K(i,j)$ and $K(i',j')$ in $S$ with dimension $2n\times2n$
$(n>$1) which are connected by maximum walks under the assumptions
of Theorem \ref{theorem 2} are $\left[\left(2n\right)\left\{ \left(2n\times2\right)-2\right\} \right]$.\end{cor}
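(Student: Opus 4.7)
The plan is to apply Theorem \ref{theorem 2}, whose content is that a pair of cells $(K(i,j),K(i',j'))$ in a $2n\times 2n$ grid admits a walk of the maximum length $(2n)^{2}-1$ precisely when the two cells are edge-adjacent (that is, share two common vertices). This reduces the corollary to a purely combinatorial counting exercise: count the number of unordered adjacent cell pairs in $S$, i.e.\ the number of edges of the $2n\times 2n$ grid graph.

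I would organize the count by splitting adjacencies into horizontal and vertical contributions. For horizontal adjacencies, fix a row $i\in\{1,\dots,2n\}$; the $2n$ cells $(i,1),(i,2),\dots,(i,2n)$ in that row yield exactly $(2n-1)$ consecutive pairs, so summing over rows gives $2n(2n-1)$ horizontal pairs. By the symmetric argument applied to columns, the vertical adjacencies also total $2n(2n-1)$. Adding the two,
\[
2\cdot 2n(2n-1)\;=\;(2n)\bigl(2(2n)-2\bigr)\;=\;(2n)\bigl\{(2n\times 2)-2\bigr\},
\]
which is the claimed expression. A sanity check in the smallest case $n=2$ gives $(4)(6)=24$, matching the $24$ adjacencies of a $4\times 4$ grid.

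The only real subtlety, and the step I expect to need a careful remark, is reconciling the parenthetical clause in Theorem \ref{theorem 2} that the cells ``should not be the corner cells'' with the fact that the total $(2n)\{(2n\times 2)-2\}=8n^{2}-4n$ coincides with the count of \emph{all} adjacent pairs (including those in which one cell is a corner). The reconciliation is that for $n>1$ we have $2n\ge 4$, so two corner cells of $S$ are never edge-adjacent to each other; thus the condition ``not both corner cells'' is automatically satisfied by every adjacent pair, and the corner-involving adjacent pairs (such as $K(1,1),K(1,2)$, which is explicitly shown to realize the maximum in Example \ref{example1}) are included in the count. Hence nothing needs to be subtracted, and the simple horizontal-plus-vertical edge count yields the stated cardinality.
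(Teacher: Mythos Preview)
Your argument is correct and is in fact more explicit than the paper's own. The paper's proof simply verifies the count for $n=2$ (obtaining $24$) and $n=3$, and then asserts that the general formula follows ``by induction'' without writing out the inductive step. You instead give a direct decomposition into horizontal and vertical adjacencies, each contributing $2n(2n-1)$ pairs, and sum to $(2n)(4n-2)$; this is a cleaner closed-form derivation that bypasses induction entirely. Your additional remark reconciling the corner-cell clause in Theorem~\ref{theorem 2} with the total count is also something the paper does not address: you correctly observe that for $n>1$ no two corner cells are edge-adjacent, and you point to Example~\ref{example1} (where $K(1,1)$ and $K(1,2)$ realize the maximum) as evidence that adjacent pairs involving a single corner are indeed admissible, so the full edge count $8n^{2}-4n$ stands without subtraction.
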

\begin{proof}
For $n=2$, we have $4\times4$ cells and total number of pairs of
cells which satisfy criterion in Theorem \ref{theorem 2} are $24,$
which can be written as $\left[\left(2.2\right)\left\{ \left(2.2\times2\right)-2\right\} \right]$.
For $n=3$, we have $6\times6$ cells and total number of pairs of
cells satisfying Theorem \ref{theorem 2} are $\left[\left(2.2\right)\left\{ \left(2.3\times2\right)-2\right\} \right]$.
By induction we can prove the total number of pairs in $2n\times2n$
cells, connected by maximum walks are $\left[\left(2n\right)\left\{ \left(2n\times2\right)-2\right\} \right]$.
\end{proof}
\begin{landscape}

\begin{figure}
\begin{alignat*}{1}
\left[\begin{array}{ccccccccccccccccccccc}
 & 1 &  & 2 &  & 3 &  & 4 &  & 5 &  & 6 &  & 7 &  & 8 &  & 9 &  & 10\\
1 & K(1,1) &  & \leftarrow &  & K(1,3) &  & \downarrow &  & \dashleftarrow &  & \dashleftarrow &  & \dashleftarrow &  & \dashleftarrow &  & \dashleftarrow & \leftarrow & K(1,10)\\
 & \downarrow &  &  &  &  &  &  &  &  &  &  &  &  &  &  &  &  &  & \uparrow\\
2 & K(2,1) & \rightarrow & K(2,2) &  & \uparrow &  & \downarrow &  & K(2,5) & \rightarrow & K(2,6) &  & K(2,7) & \rightarrow & K(2,8) &  & K(2,9) & \rightarrow & K(2,10)\\
 &  &  & \downarrow &  &  &  &  &  & \uparrow &  & \downarrow &  & \uparrow &  & \downarrow &  & \uparrow\\
3 & K(3,1) & \leftarrow & K(3,2) &  & \uparrow &  & \downarrow &  &  &  &  &  &  &  &  &  & K(3,9) & \leftarrow & K(3,10)\\
 & \downarrow &  &  &  &  &  &  &  &  &  &  &  &  &  &  &  &  &  & \uparrow\\
4 & K(4,1) & \rightarrow & K(4,2) &  & \uparrow &  & \downarrow &  &  &  &  &  &  &  &  &  & K(4,9) & \rightarrow & K(4,10)\\
 &  &  & \downarrow &  &  &  &  &  &  &  &  &  &  &  &  &  & \uparrow\\
5 & K(5,1) & \leftarrow & K(5,2) &  & \uparrow &  & \downarrow &  &  &  &  &  &  &  &  &  & K(5,9) & \leftarrow & K(5,10)\\
 & \downarrow &  &  &  & \uparrow &  & \downarrow &  &  &  &  &  &  &  &  &  &  &  & \uparrow\\
6 & K(6,1) & \rightarrow & K(6,2) &  & \mathbf{K(6,3)} &  & \mathbf{K(6,4)} &  & \vdots &  & \vdots &  & \vdots &  & \vdots &  & K(6,9) & \rightarrow & K(6,10)\\
 &  &  & \downarrow &  &  &  &  &  &  &  &  &  &  &  &  &  & \uparrow\\
7 & K(7,1) & \leftarrow & K(7,2) &  & K(7,3) & \rightarrow & K(7,4) &  &  &  &  &  &  &  &  &  & K(7,9) & \leftarrow & K(7,10)\\
 & \downarrow &  &  &  &  &  & \vdots &  &  &  &  &  &  &  &  &  &  &  & \uparrow\\
8 & K(8,1) & \rightarrow & K(8,2) &  &  &  & \downarrow &  &  &  &  &  &  &  &  &  & K(8,9) & \rightarrow & K(8,10)\\
 &  &  & \downarrow &  &  &  &  &  &  &  &  &  &  &  &  &  & \uparrow\\
9 & K(9,1) & \leftarrow & K(9,2) &  & \uparrow &  &  &  &  &  &  &  &  &  &  &  & K(9,9) & \leftarrow & K(9,10)\\
 & \downarrow &  &  &  & \vdots &  &  &  & \uparrow &  & \downarrow &  & \uparrow &  & \downarrow &  &  &  & \uparrow\\
10 & K(10,1) & \rightarrow &  &  & K(10,3) &  & K(10,4) & \rightarrow & K(10,5) &  & K(10,6) & \rightarrow & K(10,7) &  & K(10,8) &  &  & \rightarrow & K(10,10)\\
\\
\end{array}\right]
\end{alignat*}

\caption{Configuration for straight and non-overlapping walk in a $10\times10$
when even number of blank columns are present before $K(5,4)$ and
$K(5,5).$}

\label{figure 2-2}
\end{figure}

\end{landscape}

\begin{landscape}$ $

\begin{figure}
$\left[\begin{array}{cccccccccccccccc}
 & 1 &  & 2 &  & 3 &  & 4 &  & 5 &  & 6 &  & 7 &  & 8\\
1 & K(1,1) &  & \cdots &  &  & \leftarrow & K(1,4) &  & K(1,5) &  & \dashleftarrow &  & \dashleftarrow &  & K(1,8)\\
 & \downarrow &  &  &  &  &  & \uparrow &  & \downarrow\\
2 &  &  & K(2,2) & \rightarrow & K(2,3) &  & \uparrow &  & \downarrow &  & K(2,6) & \rightarrow & K(2,7)\\
 &  &  & \uparrow &  & \downarrow &  & \uparrow &  & \downarrow &  & \uparrow &  & \downarrow\\
3 &  &  &  &  &  &  & \uparrow &  & \downarrow\\
 &  &  &  &  &  &  & \uparrow &  & \downarrow\\
4 & \vdots &  & \vdots &  & \vdots &  & \uparrow &  & \downarrow &  & \vdots &  & \vdots &  & \vdots\\
 &  &  &  &  &  &  & \uparrow &  & \downarrow\\
5 &  &  &  &  &  &  & \mathbf{K(5,4)} &  & \mathbf{K(5,5)}\\
\\
6 &  &  &  &  &  &  & K(6,4) & \rightarrow & K(5,5)\\
 &  &  &  &  &  &  & \uparrow &  & \downarrow\\
7 &  &  &  &  &  &  & \vdots &  & \vdots\\
 & \downarrow &  & \uparrow &  & \downarrow &  & \uparrow &  & \downarrow &  & \uparrow &  & \downarrow &  & \uparrow\\
8 & K(8,1) & \rightarrow & K(8,2) &  & K(8,3) & \rightarrow & K(8,4) &  & K(8,5) & \rightarrow & K(8,6) &  & K(8,7) & \rightarrow & K(8,8)
\end{array}\right]$

\caption{Configuration for straight and non-overlapping walk in a $8\times8$
when odd number of blank columns are present before $K(6,3)$ and
$K(6,4).$}
\label{Figure 2-3}
\end{figure}

\end{landscape}
\begin{thm}
\label{thm4}When $S$ has dimension $(2n+1)\times(2n+1)$ $(n\geq1)$
then there always exists at least one configuration for which the
walk between $K(i,j)$ and $K(i',j')$ is maximum, i.e. $(2n+1)^{2}-1$
units, under the hypotheses of straight walk and non-overlapping walk
and satisfying each of the following criteria: (i) when $S_{ij}(i,j)$
and $S_{i'j'}(i',j')$ are on a same main diagonal, (ii) when $S_{ij}(i,j)$
and $S_{i'j'}(i',j')$ are on same row or same column and separated
by at least one cell and these $S_{ij}(i,j)$ and $S_{i'j'}(i',j')$
are not located in the $2nd$ column or $2nd$ row and $2n^{th}$
column or $2n^{th}$ row.\end{thm}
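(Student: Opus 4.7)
The plan is to produce explicit Hamiltonian path configurations in the $(2n+1)\times(2n+1)$ grid whose endpoints are precisely the prescribed cells $K(i,j)$ and $K(i',j')$, in the same constructive spirit as Theorem \ref{thm:1} and Theorem \ref{theorem 2}. Since $S$ contains $(2n+1)^{2}$ cells, achieving a walk of length $(2n+1)^{2}-1$ under the straight and non-overlapping hypotheses is equivalent to exhibiting a Hamiltonian path with the specified endpoints, so the entire argument reduces to supplying such paths and verifying that they are legal.

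Before building the configurations, I would record a parity constraint that makes the hypotheses of the theorem natural. Two-coloring $S$ with cell $(i,j)$ black when $i+j$ is even and white otherwise, the $(2n+1)\times(2n+1)$ grid has exactly one more black cell than white; hence any Hamiltonian path must begin and end on black cells. Main-diagonal cells $(k,k)$ satisfy $i+j=2k$ and are automatically black, so case (i) is consistent. In case (ii), the exclusion of the $2$nd and $2n$th rows and columns together with the requirement that the endpoints be separated by at least one cell in a common row or column forces $j-j'$ (respectively $i-i'$) to be even, so again the endpoints share parity.

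For case (i), I would first treat the extreme pair $(1,1)$ and $(2n+1,2n+1)$ by the standard boustrophedon pattern already illustrated in Example \ref{example1} for $n=2$: sweep row $1$ left-to-right, drop to row $2$, sweep right-to-left, and so on, finishing at $(2n+1,2n+1)$ because $2n+1$ is odd. For an interior diagonal pair $(k,k)$ and $(k',k')$ with $k<k'$, I would split $S$ into three horizontal bands delimited by the rows $k$ and $k'$, traverse each band by a horizontal snake, glue the bands together via vertical links in the first and last columns, and install two small local reroutings near the distinguished cells that force the walk to enter at $(k,k)$ and to leave at $(k',k')$. For case (ii) with $(i,j)$ and $(i,j')$ in a common row, I would use vertical snakes in the column blocks $\{1,\ldots,j-1\}$ and $\{j'+1,\ldots,2n+1\}$, connected through rows $1$ and $2n+1$, and then traverse the central block $\{j,\ldots,j'\}$ by a vertical snake oriented so that it starts at $(i,j)$ and terminates at $(i,j')$; the same-column subcase follows by a $90^{\circ}$ rotation of $S$.

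The main obstacle, as in the proof of Theorem \ref{theorem 2}, will be verifying that the constructed walk actually visits every cell exactly once in every sub-configuration; this entails a finite subcase analysis organized by the parities of $k$, $k'-k$, $j-1$ and $2n+1-j'$. The restriction excluding the $2$nd and $2n$th rows and columns in case (ii) appears to be engineered precisely so that each lateral block contains enough columns to complete a vertical snake that exits on the correct row $i$; confirming this is the crux of the construction. Finally, I would formalize the induction-style argument of Example \ref{example1} to confirm that the total step count equals $(2n+1)^{2}-1$ in every subcase, thereby matching the upper bound from part (A) of Theorem \ref{thm:1}.
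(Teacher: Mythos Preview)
Your approach is genuinely different from the paper's. The paper's proof of Theorem~\ref{thm4} is not a general construction at all: it exhibits a handful of explicit configurations in $5\times5$ and $7\times7$ grids (Figures~\ref{figureK11-K44}--\ref{K11-K15,K31-K35}) and then asserts that ``the pattern of walk configured above will be same for other dimensions.'' You instead attempt a systematic recipe (boustrophedon sweeps, band decompositions, column-block snakes) together with a parity analysis. In spirit this is considerably more rigorous than what the paper offers, and the band/snake decomposition is the right sort of tool.

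There is, however, a genuine gap in your parity paragraph. You claim that in case~(ii) ``the exclusion of the $2$nd and $2n$th rows and columns together with the requirement that the endpoints be separated by at least one cell \ldots\ forces $j-j'$ to be even.'' This is false. Take $n=2$, so $S$ is $5\times5$, and consider $K(3,1)$ and $K(3,4)$: they lie in the same row, are separated by two cells, and neither sits in the $2$nd or $4$th column, so the hypotheses of~(ii) are met; yet $j-j'=3$ is odd. Under the black/white coloring you introduced, $(3,1)$ is black and $(3,4)$ is white, so by your own parity observation no Hamiltonian path can join them. Thus the hypotheses as stated do \emph{not} guarantee the parity condition you need, and your construction for case~(ii) cannot succeed for such pairs.

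What this actually exposes is that the theorem statement is underspecified: every example the paper works out (e.g.\ $K(3,1)$ to $K(3,5)$, $K(3,1)$ to $K(3,7)$, $K(1,1)$ to $K(1,5)$) happens to have both endpoints of the correct color, but the written hypotheses do not enforce this. Your parity analysis is therefore a useful addition, but you must either add the parity condition as an explicit extra hypothesis or restrict case~(ii) further before your column-block snake construction can go through. The ``local rerouting'' step in case~(i) for interior diagonal pairs $(k,k)$, $(k',k')$ is also only sketched; since the paper handles case~(i) purely by example, you will need to spell out those reroutings carefully if you want a proof that is actually stronger than the paper's.
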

\begin{proof}
Before generalizing, we will give some numerical demonstrations of
configuration of maximum walks. 

(i) Suppose $n=2$, we have an $S$ with $5\times5$. Let $K(i,j)=K(1,1)$
and $K(i',j')=K(4,4)$. Configuration for maximum walk from $K(1,1)$
to $K(4,4)$ is shown in Figure \ref{figureK11-K44}(a). This type
of configurations can be adopted for reaching $K(2n,2n)$ from $K(1,1)$
for higher dimensions $n>3$ as well. Similarly configurations for
maximum walks from $K(5,1)$ to $K(1,5)$ in Figure \ref{figureK11-K44}(b)
and from $K(7,1)$ to $K(6,2)$ in Figure \ref{figure K71-K62} can
be extended for other dimensions. There exists at least one walk which
covers the maximum distance under the straight and non-overlapping
walk to reach any two cells on the main diagonal.

(ii) Let us understand the configurations, when $K(3,1)$ walks to
the cell $S_{37}$ in a $7\times7$ dimension (See Figure \ref{K31-K37}),
when $K(3,1)$ walks to the cell $S_{35}$, i.e. same row separated
by three cells in the middle row and when $K(1,1)$ walks to the cell
$S_{15}$, same row separated by three cells in the top row of a $3\times3$
dimension. These configurations are given in Figure \ref{K11-K15,K31-K35},
Figure \ref{K11-K15,K31-K35}(a) and Figure \ref{K11-K15,K31-K35}(b).
If we need to construct a maximum walk between two cells in a column
then we rotate the square where we described configuration for rows
and then proceed in a similar pattern. The pattern of walk configured
above will be same for other dimensions. 
\end{proof}
\begin{figure}
a) $\left[\begin{array}{cccccccccc}
 & 1 &  & 2 &  & 3 &  & 4 &  & 5\\
1 & \mathbf{K(1,1)} & \rightarrow & \rightarrow & \rightarrow & \rightarrow & \rightarrow & \rightarrow & \rightarrow & K(1,5)\\
 &  &  &  &  &  &  &  &  & \downarrow\\
2 & K(2,1) & \leftarrow &  &  & \cdots &  &  & \leftarrow & K(2,5)\\
 & \downarrow\\
3 & K(3,1) & \rightarrow &  &  & \cdots &  &  & \rightarrow & K(3,5)\\
 &  &  &  &  &  &  &  &  & \downarrow\\
4 & K(4,1) & \rightarrow & \rightarrow & \rightarrow & \rightarrow & \rightarrow & \mathbf{K(4,4)} &  & \vdots\\
 &  &  &  &  &  &  &  &  & \downarrow\\
5 & K(5,1) & \leftarrow &  &  & \cdots &  &  & \leftarrow & K(5,5)
\end{array}\right]$

$ $

b) $\left[\begin{array}{cccccccccc}
 & 1 &  & 2 &  & 3 &  & 4 &  & 5\\
1 & K(1,1) & \rightarrow & K(1,2) &  & K(1,3) & \rightarrow & K(1,4) &  & \mathbf{K(1,5)}\\
 & \uparrow &  & \downarrow &  & \uparrow &  & \downarrow &  & \uparrow\\
2 & \uparrow &  &  &  &  &  &  &  & \uparrow\\
 & \uparrow &  &  &  &  &  &  &  & \uparrow\\
3 & \uparrow &  & \vdots &  & \vdots &  & \vdots &  & \uparrow\\
 & \uparrow &  &  &  &  &  &  &  & \uparrow\\
4 & \uparrow &  &  &  &  &  &  &  & \uparrow\\
 & \uparrow &  & \downarrow &  & \uparrow &  & \downarrow\vdots &  & \uparrow\\
5 & \mathbf{K(5,1)} &  & K(5,1) & \rightarrow & K(5,3) &  & K(5,4) & \rightarrow & K(5,5)
\end{array}\right]$$ $

$ $

\caption{Configurations for $n=2$ in Theorem \ref{thm4} }
\label{figureK11-K44}
\end{figure}

\begin{rem}
We can obtain configurations which are not satisfied by Theorem \ref{thm4},
for example, in a $5\times5$ area, if $K(2,1)$ has to walk to $S_{24}$,
there exists a configuration, but there doesn't for $K(2,1)$ to $S_{23}$.
Hence a general statement like the one in Theorem \ref{thm4} is not
applicable for the $2nd$ row. \end{rem}
\begin{thm}
\label{thm6}Given an $S$ with $(2n+1)\times(2n+1)$, all the pairs
$K(i,j)$ and $K(i',j')$, lying on $S_{ij}(i,j)$ and $S_{i'j'}(i',j')$
which are in same diagonals of cell size ($2n+1)$ for all $n\geq1$
can be connected by straight and non-overlapping walk with a maximum
distance. \end{thm}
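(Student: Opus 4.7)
The plan is to reduce the claim to the constructions already exhibited in Theorem \ref{thm4}(i), combined with a symmetry argument. Since $(2n+1)$ is odd, the only diagonals of an $(2n+1)\times(2n+1)$ grid containing exactly $(2n+1)$ cells are the two full diagonals $\mathcal{D}_1=\{(k,k):1\le k\le 2n+1\}$ and $\mathcal{D}_2=\{(k,2n+2-k):1\le k\le 2n+1\}$. So the task reduces to exhibiting, for every pair of cells on $\mathcal{D}_1$ (and by symmetry every pair on $\mathcal{D}_2$), a straight non-overlapping walk of length $(2n+1)^2-1$, that is, a walk visiting every cell exactly once.

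For $\mathcal{D}_1$, fix $1\le a<b\le 2n+1$ and aim to connect $K(a,a)$ with $K(b,b)$. I would split the grid into three horizontal bands: the upper band of rows $1,\dots,a$, a middle band of rows $a+1,\dots,b-1$, and a lower band of rows $b,\dots,2n+1$. Starting at $K(a,a)$, the walk first climbs to row $1$ along column $a$, sweeps the upper band serpentine-style (exactly the pattern used in Figure \ref{figureK11-K44}(a)), then descends to re-enter the middle band at $(a+1,a+1)$ or $(a+1, a-1)$ depending on the parity of $a$. In the middle and lower bands the walk executes a column-oriented serpentine interrupted by the $2\times2$ jog device of Theorem \ref{theorem 2} (steps (xxiii)--(xxv) there), carefully timed so that the very last cell touched is $K(b,b)$. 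Because $(2n+1)$ is odd, each serpentine sweep terminates at a predictable corner of its band, so the joins between bands preserve adjacency and non-overlap, and the total length equals $(2n+1)^2-1$.

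For $\mathcal{D}_2$ I would invoke symmetry: the reflection $(i,j)\mapsto(i,2n+2-j)$ is a bijection of the cell set that swaps $\mathcal{D}_1$ and $\mathcal{D}_2$ and carries any straight non-overlapping walk to another such walk of the same length. Thus every Hamiltonian walk connecting two cells of $\mathcal{D}_1$ yields one connecting the corresponding two cells of $\mathcal{D}_2$, so no new construction is required. The constructions displayed in Figures \ref{figureK11-K44}(a)--(b) and \ref{figure K71-K62} already illustrate the base cases $(a,b)=(1,4)$ and the anti-diagonal pair $K(5,1)$--$K(1,5)$ in the $5\times5$ grid, and the scheme above is simply their uniform generalization.

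The main obstacle will be verifying that the three-band splice works for all admissible pairs $(a,b)$ rather than only for the numerical instances drawn in the figures. Degenerate situations such as $a=1$, $b=2n+1$, or $b=a+1$ force one or two bands to collapse, and one must adjust the entry and exit corners of each non-degenerate band accordingly. The bookkeeping naturally decomposes into four sub-cases by the parities of $a$ and $b-a$, but in each sub-case the underlying ingredients are exactly the serpentine sweep and the $2\times2$ jog already validated in the proofs of Theorems \ref{thm:1} and \ref{theorem 2}; the remaining work is to check that the transitions between bands do not revisit any cell and remain compatible with the straight-walk rule, which is a finite parity verification.
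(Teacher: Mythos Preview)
Your proposal rests on a misreading of the statement. You assume that ``diagonals of cell size $(2n+1)$'' refers only to the two main diagonals of the $(2n+1)\times(2n+1)$ grid, and you then reduce everything to Theorem~\ref{thm4}(i) plus a reflection. But the paper's proof makes clear that the phrase is meant to range over \emph{all} diagonals of odd length at least $3$: for the $5\times5$ case it explicitly lists ``concerned diagonals with cell sizes $3,5$'', and for the $7\times7$ case it lists ``$3,5,7$''. The configurations in Figures~\ref{K31-K32} (connecting $K(3,1)$ to $K(2,2)$, a length-$3$ anti-diagonal in a $5\times5$ grid) and~\ref{K73-K64} (connecting $K(7,3)$ to $K(6,4)$, a length-$5$ anti-diagonal in a $7\times7$ grid) are precisely the off-main-diagonal cases your argument never touches.

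For the main diagonals your reduction to Theorem~\ref{thm4}(i) is fine and matches what the paper does. But the three-band splice you describe is tailored to endpoints of the form $(a,a)$ and $(b,b)$; it does not obviously adapt to a pair like $K(3,1),K(2,2)$, where neither endpoint lies on the main diagonal and the relevant short diagonal hugs a corner of the grid. The paper handles these by exhibiting explicit walk patterns for the small cases and asserting that the same patterns extend, so at minimum your proof needs an analogous construction (or a reduction argument) covering pairs on every odd-length diagonal, not just the two of full length.
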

\begin{proof}
For $n=1$ the result is true by the Theorem \ref{thm4}(i). For $n=2,$
the dimension of $S$ is $5\times5$ and concerned diagonals with
cell sizes are: $3$, $5$. We have two diagonals with cell size $3$.
Let us consider $K(i,j)=K(3,1)$ and $K(i',j')=K(2,2).$ The configuration
for a walk from $K(3,1)$ to $K(2,2)$ is given in Figure \ref{K31-K32}.
Similarly, other configurations for walks between cells in same diagonals
in $5\times5$ can be constructed. The results is true for diagonal
with cell size is $5$ using Theorem \ref{thm4}(i). For $n=3,$ the
dimension of $S$ is $5\times5$ and concerned diagonals with cell
sizes are: $3$, $5$, $7$. A configuration for walk between two
cells of a diagonal with cell size $3$ can be repeated as discussed
before in this proof. A configuration for a walk between two cells
of main diagonal with cell size $7$ can be constructed using Theorem
\ref{thm4}(i). We demonstrate a configuration for a walk between
two cells $S_{73}(7,3)$ to $S_{64}(6,4)$ in a diagonal with a size
of $5$ in Figure \ref{K73-K64}. The pattern of walks in these examples
can be extended for higher dimensions. For every higher dimension,
we will have similar configuration such that the condition is satisfied
for every diagonal of size $2n+1$ for $n\geq1$. 
\end{proof}
\begin{figure}
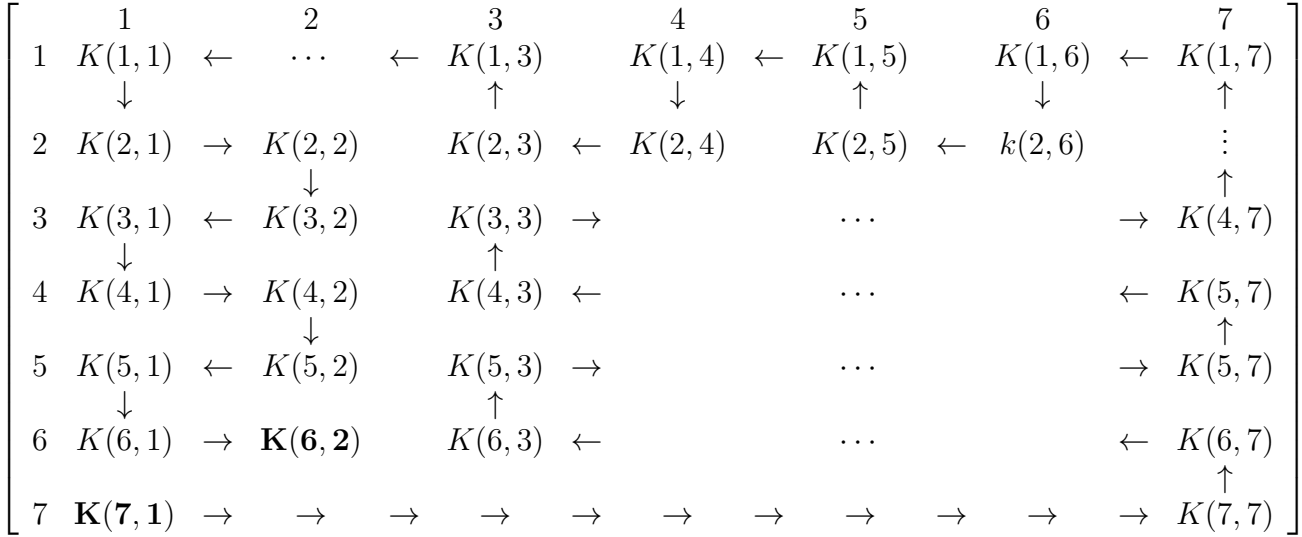

$\left[\begin{array}{cccccccccccccc}
 & 1 &  & 2 &  & 3 &  & 4 &  & 5 &  & 6 &  & 7\\
1 & K(1,1) & \leftarrow & \cdots & \leftarrow & K(1,3) &  & K(1,4) & \leftarrow & K(1,5) &  & K(1,6) & \leftarrow & K(1,7)\\
 & \downarrow &  &  &  & \uparrow &  & \downarrow &  & \uparrow &  & \downarrow &  & \uparrow\\
2 & K(2,1) & \rightarrow & K(2,2) &  & K(2,3) & \leftarrow & K(2,4) &  & K(2,5) & \leftarrow & k(2,6) &  & \vdots\\
 &  &  & \downarrow &  &  &  &  &  &  &  &  &  & \uparrow\\
3 & K(3,1) & \leftarrow & K(3,2) &  & K(3,3) & \rightarrow &  &  & \cdots &  &  & \rightarrow & K(4,7)\\
 & \downarrow &  &  &  & \uparrow\\
4 & K(4,1) & \rightarrow & K(4,2) &  & K(4,3) & \leftarrow &  &  & \cdots &  &  & \leftarrow & K(5,7)\\
 &  &  & \downarrow &  &  &  &  &  &  &  &  &  & \uparrow\\
5 & K(5,1) & \leftarrow & K(5,2) &  & K(5,3) & \rightarrow &  &  & \cdots &  &  & \rightarrow & K(5,7)\\
 & \downarrow &  &  &  & \uparrow\\
6 & K(6,1) & \rightarrow & \mathbf{K(6,2)} & \mathbf{} & K(6,3) & \leftarrow &  &  & \cdots &  &  & \leftarrow & K(6,7)\\
 &  &  &  &  &  &  &  &  &  &  &  &  & \uparrow\\
7 & \mathbf{K(7,1)} & \rightarrow & \rightarrow & \rightarrow & \rightarrow & \rightarrow & \rightarrow & \rightarrow & \rightarrow & \rightarrow & \rightarrow & \rightarrow & K(7,7)
\end{array}\right]$

\caption{Configurations for $n=3$ in Theorem \ref{thm4}(i)}

\label{figure K71-K62}
\end{figure}

\begin{landscape}

\begin{figure}
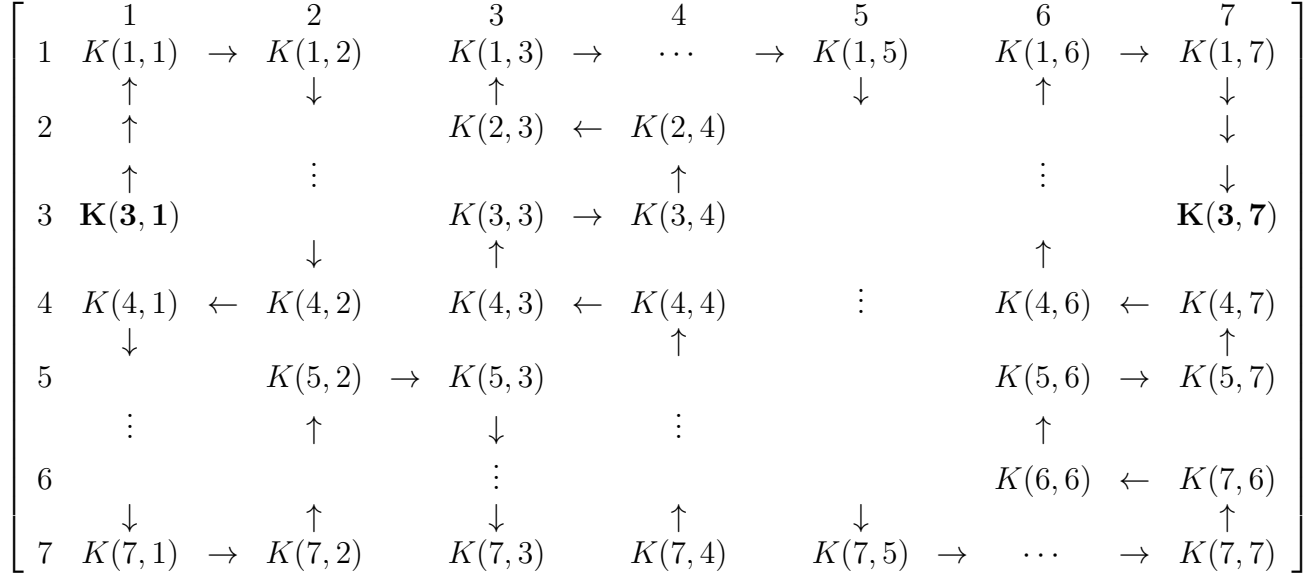

$\left[\begin{array}{cccccccccccccc}
 & 1 &  & 2 &  & 3 &  & 4 &  & 5 &  & 6 &  & 7\\
1 & K(1,1) & \rightarrow & K(1,2) &  & K(1,3) & \rightarrow & \cdots & \rightarrow & K(1,5) &  & K(1,6) & \rightarrow & K(1,7)\\
 & \uparrow &  & \downarrow &  & \uparrow &  &  &  & \downarrow &  & \uparrow &  & \downarrow\\
2 & \uparrow &  &  &  & K(2,3) & \leftarrow & K(2,4) &  &  &  &  &  & \downarrow\\
 & \uparrow &  & \vdots &  &  &  & \uparrow &  &  &  & \vdots &  & \downarrow\\
3 & \mathbf{K(3,1)} &  &  &  & K(3,3) & \rightarrow & K(3,4) &  &  &  &  &  & \mathbf{K(3,7)}\\
 &  &  & \downarrow &  & \uparrow &  &  &  &  &  & \uparrow\\
4 & K(4,1) & \leftarrow & K(4,2) &  & K(4,3) & \leftarrow & K(4,4) &  & \vdots &  & K(4,6) & \leftarrow & K(4,7)\\
 & \downarrow &  &  &  &  &  & \uparrow &  &  &  &  &  & \uparrow\\
5 &  &  & K(5,2) & \rightarrow & K(5,3) &  &  &  &  &  & K(5,6) & \rightarrow & K(5,7)\\
 & \vdots &  & \uparrow &  & \downarrow &  & \vdots &  &  &  & \uparrow\\
6 &  &  &  &  & \vdots &  &  &  &  &  & K(6,6) & \leftarrow & K(7,6)\\
 & \downarrow &  & \uparrow &  & \downarrow &  & \uparrow &  & \downarrow &  &  &  & \uparrow\\
7 & K(7,1) & \rightarrow & K(7,2) &  & K(7,3) &  & K(7,4) &  & K(7,5) & \rightarrow & \cdots & \rightarrow & K(7,7)
\end{array}\right]$

\caption{Configuration for $n=3$ in Theorem \ref{thm4}(ii)}
\label{K31-K37}
\end{figure}

\end{landscape}

\begin{figure}
a) $\left[\begin{array}{cccccccccc}
 & 1 &  & 2 &  & 3 &  & 4 &  & 5\\
1 & \mathbf{K(1,1)} &  & K(1,2) & \rightarrow & K(1,3) &  & K(1,4) & \rightarrow & \mathbf{K(1,5)}\\
 & \downarrow &  & \uparrow &  & \downarrow &  & \uparrow\\
2 &  &  &  &  &  &  & K(2,4) & \leftarrow & K(2,5)\\
 &  &  &  &  &  &  &  &  & \uparrow\\
3 & \vdots &  & \vdots &  & \vdots &  & K(3,4) & \rightarrow & K(3,5)\\
 &  &  &  &  &  &  & \uparrow\\
4 &  &  &  &  &  &  & K(4,4) & \leftarrow & K(4,5)\\
 & \downarrow &  & \uparrow &  & \downarrow &  &  &  & \uparrow\\
5 & K(5,1) & \rightarrow & K(5,2) &  & K(5,3) & \rightarrow & \cdots & \rightarrow & K(5,5)
\end{array}\right]$$ $

$ $

b) $\left[\begin{array}{cccccccccc}
 & 1 &  & 2 &  & 3 &  & 4 &  & 5\\
1 & K(1,1) & \rightarrow & K(1,2) &  & K(1,3) & \rightarrow & \cdots & \rightarrow & K(1,5)\\
 & \uparrow &  & \downarrow &  &  &  &  &  & \downarrow\\
2 & \vdots &  & \vdots &  & \vdots &  & K(2,4) & \leftarrow & K(2,5)\\
 & \uparrow &  &  &  &  &  & \downarrow\\
3 & \mathbf{K(3,1)} &  &  &  &  &  & K(3,4) & \rightarrow & \mathbf{K(3,5)}\\
 &  &  & \downarrow &  & \uparrow\\
4 & K(4,1) & \leftarrow & K(4,2) &  & K(4,3) & \leftarrow & \cdots & \leftarrow & K(4,5)\\
 &  &  &  &  &  &  &  &  & \uparrow\\
5 & K(5,1) & \rightarrow &  &  & \cdots &  &  & \rightarrow & K(5,5)
\end{array}\right]$

\caption{Configurations for $n=2$ in Theorem \ref{thm4}(ii)}
\label{K11-K15,K31-K35}
\end{figure}

\begin{figure}
$\left[\begin{array}{cccccccccc}
 & 1 &  & 2 &  & 3 &  & 4 &  & 5\\
1 & K(1,1) & \rightarrow &  &  & \cdots &  &  & \rightarrow & K(1,5)\\
 & \uparrow &  &  &  &  &  &  &  & \downarrow\\
2 & \uparrow &  & \mathbf{K(2,2)} &  & K(2,3) & \leftarrow & K(2,4)\\
 & \uparrow &  & \uparrow &  & \downarrow &  & \uparrow\\
3 & \mathbf{K(3,1)} &  & K(3,2) & \leftarrow & K(3,3) &  & \vdots &  & \vdots\\
 &  &  &  &  &  &  & \uparrow\\
4 & K(4,1) & \rightarrow &  &  & \cdots & \rightarrow & K(4,4)\\
 & \uparrow &  &  &  &  &  &  &  & \downarrow\\
5 & K(5,1) & \leftarrow &  &  & \cdots &  &  & \leftarrow & K(5,5)
\end{array}\right]$

\caption{Configuration for a walk between $K(3,1)$ to $K(2,2)$ in the proof
of Theorem \ref{thm6} }
\label{K31-K32}
\end{figure}

\begin{landscape}

\begin{figure}
$\left[\begin{array}{cccccccccccccc}
 & 1 &  & 2 &  & 3 &  & 4 &  & 5 &  & 6 &  & 7\\
1 & K(1,1) & \rightarrow & K(1,2) &  & K(1,3) & \rightarrow & K(1,4) &  & K(1,5) & \rightarrow & \cdots & \rightarrow & K(1,7)\\
 & \uparrow &  & \downarrow &  & \uparrow &  & \downarrow &  & \uparrow &  &  &  & \downarrow\\
2 &  &  &  &  &  &  &  &  &  &  & K(2,6) & \leftarrow & K(2,7)\\
 &  &  &  &  &  &  &  &  &  &  & \downarrow\\
3 & \vdots &  & \vdots &  & \vdots &  & \vdots &  & \vdots &  & K(3,6) & \rightarrow & K(3,7)\\
 &  &  & \downarrow &  & \uparrow &  &  &  &  &  &  &  & \downarrow\\
4 &  &  & K(4,2) & \rightarrow & K(4,3) &  &  &  &  &  & K(4,6) & \leftarrow & K(4,7)\\
 & \uparrow &  &  &  &  &  & \downarrow &  & \uparrow &  & \downarrow\\
5 & K(5,1) & \leftarrow & \cdots & \leftarrow & K(5,3) &  & K(5,4) & \rightarrow & K(5,5) &  & K(5,6) & \rightarrow & K(5,7)\\
 &  &  &  &  & \uparrow &  &  &  &  &  &  &  & \downarrow\\
6 & K(6,1) & \rightarrow & \cdots & \rightarrow & K(6,3) &  & \mathbf{K(6,4)} &  & K(6,5) & \leftarrow & K(6,6) &  & \vdots\\
 & \uparrow &  &  &  &  &  & \uparrow &  & \downarrow &  & \uparrow &  & \downarrow\\
7 & K(7,1) & \leftarrow & \leftarrow & \leftarrow & \mathbf{K(7,3)} & \mathbf{} & K(7,4) & \leftarrow & K(7,5) &  & K(7,6) & \leftarrow & K(7,7)
\end{array}\right]$

\caption{Configuration for a walk between $K(6,4)$ to $K(5,5)$ in the proof
of Theorem \ref{thm6} }
\label{K73-K64}
\end{figure}

\end{landscape}

\section{Rectifiable Paths}

Let $f_{1}:[K_{1},K_{2}]\rightarrow S\subset\mathbb{R}^{2}$ be a
path in $\mathbb{R}{}^{2}$, where $K_{1}$ is a starting point and
$K_{2}$ is an ending point of a maximum walk in some $ $$S$ with
a $2n\times2n$ area described in the previous section. In this section,
we study all the basic properties of paths generated by straight and
non-overlapping walks by $K(i,j)$. For the configuration explained
in the first part of the proof of the Theorem \ref{theorem 2}, we
divide into the following partition, $P_{1}$:

\begin{eqnarray*}
P_{1} & = & \left\{ p_{0},p_{1},\cdots,p_{(4n+2)},q_{1},q_{2},\cdots,q_{(2j-5)},\right.\\
 &  & \left.r_{1},r_{2},\cdots,r_{\left(\frac{2n-j-3}{2}\right)-1},s_{1}s_{2},\cdots,s_{n+8}\right\} 
\end{eqnarray*}

where $K_{1}=p_{0}$ and $K_{2}=s_{n+8}$ and the points $f_{1}(p_{0})$,
$f_{1}(p_{1})$, $\cdots,f_{1}(p_{(4n+2)})$, $f_{1}(q_{1})$, $\cdots,f_{1}(s_{(n+8)})$
are vertices (or the knots) of the polygon joining $(i,j)$ to $(i,j+1)$.
The set of vertices $\left\{ p_{0},p_{1},\cdots,p_{(4n+2)}\right\} $
join the cells from $(i,j)$ to $(2n,3)$, the set of vertices $\left\{ q_{1},q_{2},\cdots,q_{(2j-5)}\right\} $
join the cells $(2n,3)$ to $(2n,j)$, the set of vertices $\left\{ r_{1},r_{2},\cdots,r_{\left(\frac{2n-j-3}{2}\right)-1}\right\} $
join the cells $(2n-i-1,j)$ to $(2n,2n-1)$, the set of vertices
$\left\{ s_{1}s_{2},\cdots,s_{n+8}\right\} $ join the cells $(2n,2n)$
to $(i,j+1)$. The pairs of vertices $\left\{ p_{(4n+2)},q_{1}\right\} $,
$\left\{ q_{(2j-5)},r_{1}\right\} $, and $\left\{ r_{\left(\frac{2n-j-3}{2}\right)-1},s_{1}\right\} $
are also joined. The length of this polygon is 

\begin{eqnarray}
\Delta_{f_{1}}(P_{1}) & = & \Sigma_{h=1}^{4n+2}\left\Vert f_{1}(p_{h})-f_{1}(p_{h-1})\right\Vert +\left\Vert f_{1}(q_{1})-f(p_{(4n+2)})\right\Vert \nonumber \\
 &  & +\Sigma_{h=1}^{2j-5}\left\Vert f_{1}(q_{h})-f_{1}(q_{h-1})\right\Vert +\left\Vert f_{1}(r_{1})-f_{1}(q_{(2j-5)}\right\Vert \nonumber \\
 &  & +\Sigma_{h=1}^{\left(\frac{2n-j-3}{2}\right)}\left\Vert f_{1}(r_{h})-f_{1}(r_{h-1})\right\Vert +\left\Vert f_{1}(s_{1})-f_{1}(r_{\left(\frac{2n-j-3}{2}\right)})\right\Vert \nonumber \\
 &  & +\Sigma_{h=2}^{n+8}\left\Vert f_{1}(s_{h})-f_{1}(s_{h-1})\right\Vert \label{length}
\end{eqnarray}

The properties of the positioning of $K_{1}$ and $K_{2}$ i.e. the
number of columns and rows on the sides of $K_{1}$ and $K_{2}$ in
$S$ in the Theorem \ref{theorem 2} still holds here. 
\begin{lem}
\label{lemma-f1:-is-rectifiable.}$f_{1}:[K_{1},K_{2}]\rightarrow S\subset\mathbb{R}^{2}$
is rectifiable. \end{lem}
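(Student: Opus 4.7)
The plan is to exhibit $f_1$ as a piecewise linear (polygonal) curve made of finitely many unit-length segments, and then invoke the elementary fact that every such polygon in $\mathbb{R}^2$ is rectifiable, with arc length equal to the sum of the Euclidean lengths of its segments.

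First I would observe that, by the explicit construction in the proof of Theorem \ref{theorem 2}, the walk underlying $f_1$ uses at most $(2n)^2 - 1$ unit steps between adjacent cell centers, and each step is a horizontal or vertical straight segment. The partition $P_1$ already lists every point at which the polygonal curve changes direction; between two consecutive listed vertices the curve is a single straight-line segment. Consequently the quantity $\Delta_{f_1}(P_1)$ in equation (\ref{length}) is a finite sum of Euclidean norms of integer-length displacement vectors, and in particular $\Delta_{f_1}(P_1) \leq (2n)^2 - 1 < \infty$.

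To establish rectifiability, I would take an arbitrary partition $P$ of $[K_1,K_2]$ and form the refinement $P^{*} = P \cup P_1$. By the triangle inequality applied subsegment by subsegment, refining a partition can only increase the polygonal sum, giving $\Delta_{f_1}(P) \leq \Delta_{f_1}(P^{*})$. Because $P^{*}$ already contains every turning point of the polygon, each straight piece of $f_1$ is subdivided only by collinear interior points, so a short telescoping computation yields $\Delta_{f_1}(P^{*}) = \Delta_{f_1}(P_1)$. Taking the supremum over all $P$ therefore gives $\sup_P \Delta_{f_1}(P) \leq (2n)^2 - 1$, which is precisely rectifiability.

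The main step that requires care is the bookkeeping that matches every turning point of the constructed walk with an element of $P_1$; once one checks that the lists $\{p_h\}$, $\{q_h\}$, $\{r_h\}$, $\{s_h\}$ really exhaust the corners of the polygon, everything else reduces to the standard polygonal-rectifiability lemma. The analogous statements for the walks produced in the other cases of Theorem \ref{theorem 2} and Theorem \ref{thm4} would then follow by the same template, replacing $P_1$ by the corresponding explicit vertex list.
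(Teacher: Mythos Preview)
Your proposal is correct and follows the same approach as the paper: both rest on the observation that a polygonal path built from finitely many straight segments of bounded total length is rectifiable. The paper's own proof is a one-line assertion that the sum in (\ref{length}) is bounded together with a citation to \cite{Cesari1958}, whereas you have supplied the standard refinement-and-triangle-inequality details that make the bound $\sup_{P}\Delta_{f_1}(P)\le (2n)^2-1$ explicit.
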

\begin{proof}
Since $(\ref{length})$ is bounded for all the combinations of vertices
joining the $K_{1}$ and $K{}_{2}$, the path $f_{1}$ is rectifiable.
(See \cite{Cesari1958} for rectifiable curves)\end{proof}
\begin{lem}
$f_{1}$ is of bounded variation (BV) on $[K_{1},K_{2}]$.\end{lem}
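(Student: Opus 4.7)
The plan is to deduce bounded variation directly from Lemma \ref{lemma-f1:-is-rectifiable.}, exploiting the standard equivalence between rectifiability of a planar path and bounded variation of its coordinate functions. Concretely, writing $f_{1}(t)=\bigl(x(t),y(t)\bigr)$ for $t\in[K_{1},K_{2}]$, for any partition $P=\{t_{0}<t_{1}<\cdots<t_{m}\}$ refining $P_{1}$ we have the coordinatewise bound
\[
|x(t_{h})-x(t_{h-1})|\le\|f_{1}(t_{h})-f_{1}(t_{h-1})\|,\qquad |y(t_{h})-y(t_{h-1})|\le\|f_{1}(t_{h})-f_{1}(t_{h-1})\|,
\]
so $V(x;P)+V(y;P)\le 2\,\Delta_{f_{1}}(P)$.

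First, I would note that by Lemma \ref{lemma-f1:-is-rectifiable.} the length $\Delta_{f_{1}}(P_{1})$ given by $(\ref{length})$ is finite, and that any refinement $P$ of the canonical partition $P_{1}$ only adds collinear knots on the straight segments joining consecutive corner points $f_{1}(p_{h}),f_{1}(q_{h}),f_{1}(r_{h}),f_{1}(s_{h})$; along each such segment the triangle inequality becomes an equality, so $\Delta_{f_{1}}(P)=\Delta_{f_{1}}(P_{1})$ for every refinement $P$ of $P_{1}$. For a general partition $P$, the common refinement $P\vee P_{1}$ satisfies $\Delta_{f_{1}}(P)\le\Delta_{f_{1}}(P\vee P_{1})=\Delta_{f_{1}}(P_{1})$, so
\[
\sup_{P}\Delta_{f_{1}}(P)=\Delta_{f_{1}}(P_{1})<\infty.
\]

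Next, combining the coordinatewise inequality with this supremum gives
\[
V(x;[K_{1},K_{2}])+V(y;[K_{1},K_{2}])\le 2\,\Delta_{f_{1}}(P_{1})<\infty,
\]
so both coordinate functions lie in $BV[K_{1},K_{2}]$, and therefore $f_{1}$ itself is of bounded variation on $[K_{1},K_{2}]$ in the vector-valued sense. I would cite \cite{Cesari1958} for the general statement that rectifiability and bounded variation coincide for continuous planar paths.

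The only subtle point, and what I would present as the main (though minor) obstacle, is justifying that the supremum over all partitions is attained by $P_{1}$ rather than growing when $P_{1}$ is refined. This is precisely the observation that our path $f_{1}$ is piecewise linear with vertex set encoded in $P_{1}$, so refining within a straight edge leaves $\Delta_{f_{1}}$ unchanged; once this is written down, BV follows immediately from the finiteness established in Lemma \ref{lemma-f1:-is-rectifiable.}.
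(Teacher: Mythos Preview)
Your argument is correct and rests on the same underlying fact as the paper's proof---the total polygonal length of $f_{1}$ is finite---but you reach the conclusion by a slightly longer route. The paper simply rewrites the sum in $(\ref{length})$ with absolute values in place of norms and asserts the explicit bound $4n^{2}$ (the maximum walk has $(2n)^{2}-1$ steps), declaring this holds for all partitions and concluding BV in one line. You instead invoke Lemma~\ref{lemma-f1:-is-rectifiable.}, pass to coordinates, and give a careful refinement argument showing $\sup_{P}\Delta_{f_{1}}(P)=\Delta_{f_{1}}(P_{1})$ via piecewise linearity. Your version is more rigorous about what ``for all partitions'' means and about the vector-valued notion of BV, at the cost of some extra work; the paper's version is terser and supplies the concrete constant $4n^{2}$ but leaves the partition-supremum step implicit.
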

\begin{proof}
We have,

\begin{eqnarray*}
\left|\begin{array}{c}
\Sigma_{h=1}^{4n+2}\left|f_{1}(p_{h})-f_{1}(p_{h-1})\right|+\left|f_{1}(q_{1})-f(p_{(4n+2)})\right|\\
+\Sigma_{h=1}^{2j-5}\left|f_{1}(q_{h})-f_{1}(q_{h-1})\right|+\left|f_{1}(r1)-f_{1}(q_{(2j-5)}\right|\\
+\Sigma_{h=1}^{\left(\frac{2n-j-3}{2}\right)}\left|f_{1}(r_{h})-f_{1}(r_{h-1})\right|+\left|f_{1}(s_{1})-f_{1}(r_{\left(\frac{2n-j-3}{2}\right)})\right|\\
+\Sigma_{h=1}^{n+8}\left|f_{1}(s_{h})-f_{1}(s_{h-1})\right|
\end{array}\right| & < & 4n^{2}\\
\end{eqnarray*}

for all partitions of $[K_{1},K_{2}]$, so $f_{1}$ is of bounded
variation on $[K_{1},K_{2}]$. \end{proof}
\begin{thm}
Let $\mathbf{f}$ be a vector valued function defined as $\mathbf{f:}\left[K_{1},K_{2}\right]\rightarrow S\subset\mathbb{R}^{2}$
with components $\mathbf{f}=\left(f_{1},f_{2},\cdots,f_{k}\right)$,
then $\mathbf{f}$ is rectifiable.\end{thm}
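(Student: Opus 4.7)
The plan is to reduce the rectifiability of the vector-valued path $\mathbf{f}$ to the rectifiability of its coordinate components, which has already been established for paths of this type by Lemma \ref{lemma-f1:-is-rectifiable.} and by the subsequent bounded-variation lemma. The single tool needed is the elementary norm inequality
$$\|v\|=\Bigl(\sum_{i=1}^{k}v_{i}^{2}\Bigr)^{1/2}\leq\sum_{i=1}^{k}|v_{i}|$$
valid for any $v=(v_{1},\dots,v_{k})$.

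First I would fix an arbitrary partition $P=\{t_{0}<t_{1}<\cdots<t_{N}\}$ of $[K_{1},K_{2}]$ and write out the polygonal length
$$\Delta_{\mathbf{f}}(P)=\sum_{m=1}^{N}\|\mathbf{f}(t_{m})-\mathbf{f}(t_{m-1})\|.$$
Applying the norm inequality to each summand and swapping the order of summation gives the componentwise bound
$$\Delta_{\mathbf{f}}(P)\leq\sum_{i=1}^{k}\sum_{m=1}^{N}|f_{i}(t_{m})-f_{i}(t_{m-1})|=\sum_{i=1}^{k}\Delta_{f_{i}}(P).$$

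Next I would invoke the two preceding lemmas: every coordinate $f_{i}$ is itself a path generated by a straight, non-overlapping walk of exactly the type treated there, so each $f_{i}$ is rectifiable and of bounded variation with $\Delta_{f_{i}}(P)<4n^{2}$ uniformly in $P$. Summing over $i$ yields $\Delta_{\mathbf{f}}(P)<4kn^{2}$ for every partition $P$, and taking the supremum over $P$ shows that the total polygonal length of $\mathbf{f}$ is finite. Hence $\mathbf{f}$ is rectifiable.

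There is no serious obstacle here; the argument is essentially the triangle inequality combined with the uniform polygonal-length bound inherited from the previous section. The only point I would want to verify carefully is that every coordinate $f_{i}$ of $\mathbf{f}$ genuinely satisfies the hypotheses of Lemma \ref{lemma-f1:-is-rectifiable.}, so that the bound $4n^{2}$ on the variation of $f_{1}$ is legitimately applied to each $f_{i}$; once that is checked, the proof is only a couple of lines.
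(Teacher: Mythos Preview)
Your argument is correct and rests on the same underlying principle as the paper's---each component $f_i$ is of bounded variation, hence the vector $\mathbf{f}$ is rectifiable---but the emphasis is quite different. The paper devotes almost all of its proof to \emph{constructing} the components $f_2,f_3,\dots$: each $f_i$ is a specific maximum-walk configuration obtained from $f_1$ by shifting the zig-zag pattern in the first two columns over to columns $3$--$4$, then $5$--$6$, and so on, with an explicit partition $P_i$ and polygon-length formula $\Delta_{f_i}(P_i)$ written out for $f_2$. Only after this construction does the paper invoke ``all components are BV, hence $\mathbf{f}$ is rectifiable.'' You take the opposite route: you assume the $f_i$ are already given as walks of the required type (flagging this as the one point to verify), and instead supply the clean norm-inequality reduction $\Delta_{\mathbf{f}}(P)\le\sum_i\Delta_{f_i}(P)<4kn^2$ that the paper leaves implicit. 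Your version is analytically tidier and makes the passage from components to vector explicit; the paper's version tells you concretely what the $f_i$ are, which is precisely the verification you said you would want to carry out.
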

\begin{proof}
We have seen that $f_{1}$ is rectifiable (see Lemma \ref{lemma-f1:-is-rectifiable.}).
Suppose $f_{2}:[K_{1},K_{2}]\rightarrow S\subset\mathbb{R}^{2}$.
The graph of $f_{2}$ drawn differently than $f_{1}$ in the sense
that, joining seven vertices beginning from $K_{1}$ we will arrive
at the cell $(2,3)$, and these seven cells are as follows:

$\left\{ (i,j)=p_{0},(1,j)=p_{1},(1,1)=p_{2},(2n,1)=p_{3},(2n,2)=p_{4},(2,2)=p_{5},(2,3)=p_{6}\right\} $.

Then, in the next two columns the pattern is similar to the one generated
in the steps (iii) to (ix) in the proof of Theorem \ref{theorem 2}
to reach the cell $(2n,5)$. By making such modifications in the graph,
the pattern of graph in the first two columns in $f_{1}$ is shifted
to columns $3$ and $4$, and the rest of the graph is remaining the
same. Now the partition, $P_{2}$ of $[K_{1},K_{2}]$ is 

\begin{eqnarray*}
P_{2} & = & \left\{ p_{0},p_{2},\cdots,p_{6},q_{1},\cdots,q_{(4n-5)},r_{1},r_{2},\cdots,r_{(2j-5)},\right.\\
 &  & \left.s_{1},s_{2},\cdots,s_{\left(\frac{2n-j-3}{2}\right)-1},t_{1}t_{2},\cdots,t_{n+8}\right\} 
\end{eqnarray*}

The length of this polygon is,

\begin{eqnarray*}
\Delta_{f_{2}}(P_{2}) & = & \Sigma_{h=1}^{6}\left\Vert f_{1}(p_{h})-f_{1}(p_{h-1})\right\Vert +\left\Vert f_{1}(q_{1})-f(p_{6})\right\Vert \\
 &  & +\Sigma_{h=1}^{4n-5}\left\Vert f_{1}(q_{h})-f_{1}(q_{h-1})\right\Vert +\left\Vert f_{1}(r_{1})-f(q_{(4n-5)})\right\Vert \\
 &  & +\Sigma_{h=1}^{2j-5}\left\Vert f_{1}(r_{h})-f_{1}(r_{h-1})\right\Vert +\left\Vert f_{1}(s_{1})-f_{1}(r_{(2j-5)}\right\Vert \\
 &  & +\Sigma_{h=1}^{\left(\frac{2n-j-3}{2}\right)}\left\Vert f_{1}(s_{h})-f_{1}(s_{h-1})\right\Vert +\left\Vert f_{1}(t_{1})-f_{1}(s_{\left(\frac{2n-j-3}{2}\right)})\right\Vert \\
 &  & +\Sigma_{h=1}^{n+8}\left\Vert f_{1}(t_{h})-f_{1}(t_{h-1})\right\Vert 
\end{eqnarray*}

Path, $f_{2}$ is rectifiable. We can partition $[K_{1},K_{2}]$ in
a different way, different to $P_{1}$ and $P_{2}$ and graph $f_{3}$
can be drawn differently by shifting the pattern of the graph of $f_{2}$
in columns (3) and (4) to the columns (5) and (6), and so on. We can
see all the components of $\mathbf{f}$ are of BV on $[K_{1},K_{2}]$.
Hence $\mathbf{f}$ is rectifiable. \end{proof}
\begin{thm}
\label{thm:vectorf continuous}Suppose $f_{1}:[K_{1},K_{2}]\rightarrow\mathbb{S\subset R}^{2}$,
$f_{2}:[K_{2},K_{3}]\rightarrow\mathbb{S\subset R}^{2}$, $\cdots,$
$f_{k}:[K_{k},K_{1}]\rightarrow S\subset\mathbb{R}^{2}$ are all possible
maximum walks in a $2n\times2n$ area ($K_{i}$ need not be in an
adjacent cell to $K_{i-1})$. Suppose these paths are overlapped either
partially or completely, but each path is continuous, then the vector
$\mathbf{f}=(f_{1},f_{2},\cdots,f_{k})$ is continuous. \end{thm}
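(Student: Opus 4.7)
The plan is to reduce the statement to the standard topological fact that a map into a finite product of metric spaces is continuous if and only if each of its coordinate maps is continuous. The hypotheses already give me the hard piece (continuity of each $f_i$), so the main work is just setting up a common domain so that $\mathbf{f} = (f_1, f_2, \cdots, f_k)$ makes sense as a single vector-valued map.

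First, I would address the domain issue. As stated, each path $f_i$ carries its own parameter interval $[K_i, K_{i+1}]$ (with $K_{k+1} = K_1$), so to treat $\mathbf{f}$ as one function into $(\mathbb{R}^2)^k$ I reparametrize each $f_i$ by an orientation-preserving affine bijection onto a common reference interval, which I will call $[0,1]$. Affine bijections of intervals are homeomorphisms, so the reparametrized $f_i$ remain continuous on $[0,1]$. Call the reparametrized maps $\widetilde f_i : [0,1] \to S \subset \mathbb{R}^2$ and set $\widetilde{\mathbf{f}}(t) = (\widetilde f_1(t), \cdots, \widetilde f_k(t)) \in (\mathbb{R}^2)^k$.

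Next, I would invoke the product-topology criterion: equipping $(\mathbb{R}^2)^k$ with the Euclidean (equivalently max) norm, a map into this product is continuous at $t_0$ iff each coordinate map is continuous at $t_0$. Concretely, for any $\varepsilon>0$ I can choose $\delta_i>0$ witnessing continuity of $\widetilde f_i$ at $t_0$, and then $\delta := \min_i \delta_i$ witnesses continuity of $\widetilde{\mathbf{f}}$ at $t_0$ in the max norm, since
\[
\| \widetilde{\mathbf{f}}(t) - \widetilde{\mathbf{f}}(t_0)\|_{\max} \;=\; \max_{1\le i\le k} \| \widetilde f_i(t) - \widetilde f_i(t_0)\| \;<\; \varepsilon
\]
whenever $|t-t_0|<\delta$. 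Since $k$ is finite, the max norm and the Euclidean norm on $(\mathbb{R}^2)^k$ are equivalent, so $\widetilde{\mathbf{f}}$ is continuous in the Euclidean sense as well, and pulling back via the affine reparametrizations gives continuity of $\mathbf{f}$.

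The one point that might look like an obstacle but actually dissolves on inspection is the overlapping hypothesis: partial or complete overlap of the images $f_i([K_i,K_{i+1}])$ inside $S$ is a property of the ranges, not of the maps, and continuity depends only on how the parameter maps behave, not on how the images sit in $S$. Hence overlaps can never introduce a discontinuity into any component, and the argument above applies verbatim. The only subtle drafting point I expect is writing the reparametrization cleanly so that $\mathbf{f}$ is unambiguously defined on a single interval; once that is pinned down, the rest is immediate from the product-topology characterization.
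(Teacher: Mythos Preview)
Your argument is correct and rests on the same core idea as the paper's: continuity of each component $f_i$ forces continuity of the vector $\mathbf{f}=(f_1,\dots,f_k)$. The paper dispatches this in one line (``since each path component is also continuous, $\mathbf{f}$ is also continuous''), whereas you supply the technical scaffolding the paper omits --- an affine reparametrization onto a common interval so that the tuple is well-defined, followed by the explicit $\varepsilon$--$\delta$ product-topology verification. The paper also inserts a preliminary remark that the \emph{concatenation} $f_1 * f_2 * \cdots * f_k$ is continuous (since consecutive paths share endpoints $K_i$), which is a separate observation you do not make; but that remark is not actually used to reach the stated conclusion, so your omission is harmless. In short: same approach, with your version filling in the domain bookkeeping that the paper leaves implicit.
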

\begin{proof}
$f_{1}$ is a path which describes a walk from $K_{1}$ to $K_{2}$
and $f_{2}$ is a path which describes a walk from $K_{2}$ to $K_{3}$
and so on, the piecewise combined paths are also continuous. Since
each path component is also continuous, $\mathbf{f}$ is also continuous. \end{proof}
\begin{rem}
By corollary \ref{cor:(max possible walks)}, we have $[2n\left\{ \left(2n\times2\right)-2\right\} ]$
paths until all possible maximum walks of Theorem \ref{thm:vectorf continuous}
are generated. We are interested in investigation of properties of
such walks. Whichever cell we initiate our walk from, all possible
maximum distances are covered in the process of generation of $\mathbf{f}$.
\end{rem}

\section{The Open Problem}

Instead of constructing rectifiable paths by allowing a movement through
adjacent rows and columns, here we allowed movements through adjacent
diagonals as well. Such a construction will lead to multiple possibilities
of maximum walks by starting at each cell, which we call trees of
paths. Trees are formed at each cell whose branches are rectifiable
paths. These trees, which are flexible and exhaustive, are helpful
in visualizing more realistic chicken walks on square grids. 

\textbf{Formulation of the problem: }Suppose an area $S$ consists
of $\left(2n\times2n\right)$ cells or $((2n+1)\times$ $(2n+1))$
cells. We start a straight and non-overlapping walk within S from
one of the cells, say, $\left(i,j\right)$ for $1<i<2n$ and $1<j<2n$.
We also allow diagonal moves to an unoccupied cell. Let us denote,
$K\left(i(t_{0}),j(t_{0})\right)$ for a walk which is initiated at
$t_{0}$ from the cell $\left(i,j\right)$, $K\left(i(t_{1}),j(t_{1})\right)$
is the position of this walk (or the position of the path generated
by this walk) at time $t_{1}$ and so on until a maximum possible
distance is achieved at $t_{m}$ (say). At $K\left(i(t_{0}),j(t_{0})\right)$
there are eight possible moves to the neighboring cells available
such that at time $t_{1}$ the path has reached one the following
positions: 

\begin{eqnarray}
\left\{ \begin{array}{ccc}
K\left((i-1)(t_{1}),j(t_{1})\right), & K\left((i-1)(t_{1}),(j+1)(t_{1})\right),\\
\\
K\left((i(t_{1}),(j+1)(t_{1})\right), & K\left((i+1)(t_{1}),(j+1)(t_{1})\right),\\
\\
K\left((i+1)(t_{1}),j(t_{1})\right), & K\left((i+1)(t_{1}),(j-1)(t_{1})\right),\\
\\
K\left(i(t_{1}),(j-1)(t_{1})\right), & K\left((i-1)(t_{1}),(j-1)(t_{1})\right)
\end{array}\right\} .\label{8 directions}\\
\nonumber 
\end{eqnarray}

Unless one or more of these positions in (\ref{8 directions}) are
located in the first or last row or in the first column or last column,
at each of these positions there are seven possible moves to reach
the neighboring cells at time $t_{2}$ (because one location is automatically
blocked by the non-overlapping hypothesis). Let us choose this to
be as $K\left((i+1)(t_{1}),(j-1)(t_{1})\right)$ and the seven walk
options are:

\begin{eqnarray}
\left\{ \begin{array}{ccc}
K\left(i(t_{2}),(j-1)(t_{2})\right), & \mbox{\textbf{blocked}}\left(i,j\right),\\
\\
K\left((i+1)(t_{2}),j(t_{2})\right), & K\left((i+2)(t_{2}),j(t_{2})\right),\\
\\
K\left((i+2)(t_{2}),(j-1)(t_{2})\right), & K\left((i+2)(t_{2}),(j-2)(t_{2})\right),\\
\\
K\left((i+1)(t_{2}),(j-2)(t_{2})\right) & K\left(i(t_{2}),(j-2)(t_{2})\right)
\end{array}\right\} .\label{7 directions}\\
\nonumber 
\end{eqnarray}

If walking path position at $t_{1}$ is located in the first or last
row or in the first column or last column (excepting in the four corner
cells), then there are four possible moves available to reach neighboring
cells at time $t_{2}$. At the next stage, i.e. at $t_{3}$, we have
at least six possible walking options for each of the seven previous
position in (\ref{7 directions}), unless at $t_{3}$ we arrive at
the first or last row or at the first column or last column. Similarly,
we can identify the number of possible options at each of the future
time points. By connecting cells from origin at $t_{0}$ through each
of the possible options at each of the time points, $t_{1}$, $t_{2}$,
$\cdots$, we will construct several rectifiable paths which have
maximum distances covered. Can we obtain a generalized formula for
the number of paths with maximum distances within $S$? 

For example, for $3\times3$, we will have $16$ maximum walks if
a path is initiated at $(3,3)$, $10$ maximum paths for each walk
if it is initiated at the first or last row or at the first column
or last column (excepting in the four corner cells), $6$ maximum
paths for each walk initiated at corner cells, which gives a total
number of paths with maximum possible distances in $3\times3$ area
are of $80$. Two examples of maximum paths in the $5\times5$ grid
are shown in Figure \ref{Fig1} and Figure \ref{fig:2}.

\begin{figure}
\includegraphics[scale=0.7]{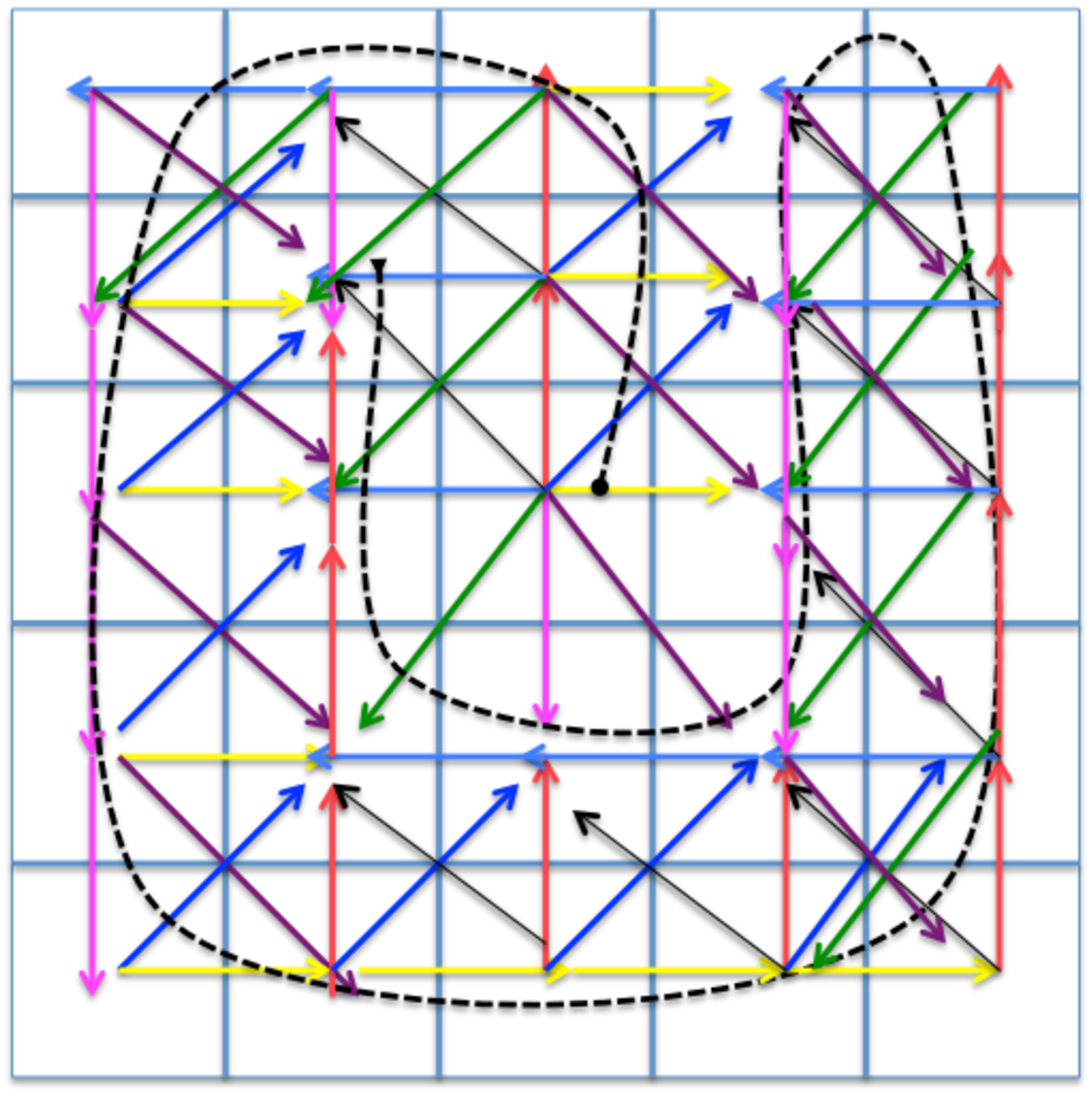}

\label{Fig1}\caption{Maximum path between cells $(3,3)$ and $(2,2)$ in the $5\times5$
grid. $ $Each colored arrow indicates the same direction but in a
different cell. All possible directions in each cell based on straight
and non-overlapping criteria are shown by arrow lines. Dotted line
is one of the longest possible paths \uline{without using a diagonal
movement} option. }
\end{figure}

\begin{figure}
\includegraphics[scale=0.7]{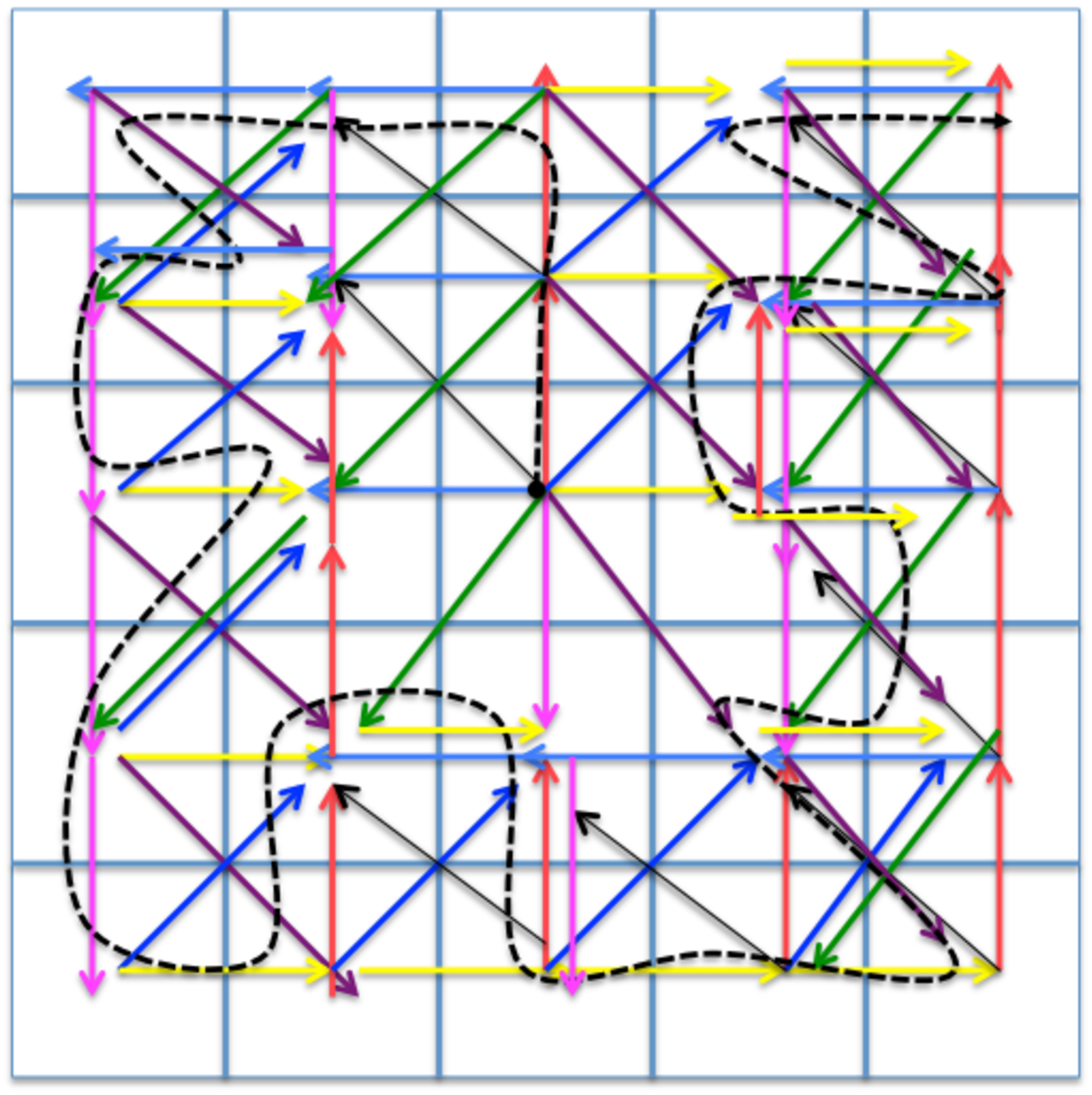}

\caption{\label{fig:2}Maximum path between cells $(3,3)$ and $(1,5)$ in
the $5\times5$ grid. $ $Each colored arrow indicates the same direction
but in a different cell. All possible directions in each cell based
on straight and non-overlapping criteria are shown by arrow lines.
Dotted line is one of the longest possible paths \uline{using a
diagonal movement} option. }

\end{figure}

\pagebreak

\section{Discussion }

Deconstructing movement into individual non-overlapping walks within
a grid-delineated area, which may then be strung together to model
the frequently repetitive, overlapping walks characteristic of the
domestic chicken, provides a framework to model faecal parasite dissemination.
Under the straight and non-overlapping set-up we are able to prove
conditions that prevent formation of maximum walks in a $2n\times2n$
grid (see Theorem \ref{theorem 2}), and in a \textbf{$(2n+1)\times(2n+1)$}
grid (see Theorem \ref{thm4}). In section 4, we have proved that
a vector of functions of bounded variations defined on a maximum possible
walk is rectifiable. By joining several rectifiable paths we arrive
at more meaningful chicken walks, which mimic several realistic situations
for understanding parasite transmissions.\textbf{ }Incorporating data
describing rate of defaecation (and thus parasite excretion) and previously
modelled transmission rates will then be key components in construction
of pathogen transmission models. Here we describe a mathematical model
defining host movement, in this case a chicken but it could be any
host, as the first tier of detail in the construction of a dynamic
model for transmission of a pathogen which is usually not airborne,
such as \emph{Eimeria.} Spatial placement of a chicken in its pen
or enclosure at any given time allows calculation of primary parasite
dissemination, providing a tool with which the frequency of opportunities
for neighbouring naive chickens to become infected may be predicted.
Extension of these calculations can be used to model pathogen transmission
through a flock. This approach can be adapted with relevant biological
parameters for any pathogen transmitted by direct or indirect physical
contact. 

We have provided a framework for understanding walks of chicken. By
joining several non-overlapping walks we get one complete walk of
a chicken per unit time. By joining several individual non-overlapping
walks, the resultant walk contains sub-walks which could be overlapped
and this is close to the reality of a flock of birds in a pen. Since
size of a cell within a grid is arbitrary, hence our analysis is flexible
to capture walks within very small pen sizes. Informed by this framework
each individual walk taken by a chicken may be portrayed across grids
through diagonal as well as non-diagonal dimensions. By joining multiple
paths we can define possible chicken behaviour over longer periods
of time. Marrying these behavioural measures with biological data,
including previously published rates of parasite transmission, we
hope to develop a method of understanding pathogen transmission dynamics
within the pens. One of our future aims of understanding chicken walks
is to predict the presence or absence of Eimeria in a chicken and
hence the proportion of infected chickens in a pen as an important
step towards transmission dynamics models for Eimeria. We wish to
study and build conjectures in general on association between the
longest paths of bird movement and disease dynamics. Other potential
applications for our chicken walk models include building age-structured
graphical models for chicken walks. One of our future aims of understanding
chicken walks is to predict the presence or absence of Eimeria in
a chicken and hence the proportion of infected chickens in a pen as
an important step towards transmission dynamics models for Eimeria.
We wish to study and build conjectures in general on association between
the longest paths of bird movement and disease dynamics. Other potential
applications for our chicken walk models include building age-structured
graphical models for chicken walks.

\section*{Acknowledgements}

Professor Lord Robert May (University of Oxford) encouraged new ideas
introduced in this work to study bird movements and gave useful comments,
Professor Tetali Prasad (Georgia Tech, Atlanta) suggested to draw
Figures in section 4, Professor Christopher Bishop (State University
of New York) suggested key references on Hamiltonian path problems.
Very useful corrections and comments by the two referees helped us
to rewrite several sentences and to add section 2 which improved overall
content of the paper. Professor N. Yathindra (Director, Institute
of Biotechnology and Applied Bioinformatics, Bangalore) and Professor
N.V. Joshi (Indian Institute of Science, Bangalore) introduced Arni
Rao to the Eimeria project initiated by the Royal Veterinary Collage,
London. Our sincere gratitude to all. This work was funded by BBSRC
(reference number BB/H009337/1). 

$ $
\end{document}